\tikzstyle{vertex}=[circle,draw,fill=black,inner sep=1pt]
\title{Forbidden induced subgraphs of double-split graphs}
\author{Boris Alexeev}
\author{Alexandra Fradkin}
\author{Ilhee Kim}
\newtheorem {theorem}            {}[section]
\newcommand{\betweenspace}{\\ \\}
\newcommand{\hsp}{\hspace{0pt}}
\newcommand{\step}[1]{\hsp\newline \emph{#1} \newline}
\newcommand{\co}[1]{\ensuremath{\overline{#1}}}
\newcommand{\iso}{\ensuremath{\cong}}
\renewcommand{\emptyset}{\ensuremath{\varnothing}}
\newcommand{\upright}[1]{\ensuremath{\mathrm{#1}}}
\newcommand{\watch} {\upright{watch}}
\newcommand{\tv}    {\upright{TV}}
\newcommand{\flag}  {\upright{flag}}
\newcommand{\fish}  {\upright{fish}}
\newcommand{\domino}{\upright{domino}}
\newcommand{\tent}  {\upright{tent}}
\newcommand{\figurehere}{\begin{figure}[ht]
\begin{tikzpicture}[scale=1.14]
\node(a0) at (0,0.542705) [vertex] {};
\node(a1) at (0.570634,0.128115) [vertex] {};
\node(a2) at (0.352671,-0.542705) [vertex] {};
\node(a3) at (-0.352671,-0.542705) [vertex] {};
\node(a4) at (-0.570634,0.128115) [vertex] {};
\path(a0) edge (a1)(a1) edge (a2)(a2) edge (a3)(a3) edge (a4)(a4) edge (a0);
\node at (0,-1.2){$F_{1} = C_5$};\node(b0) at (3.2,0.5) [vertex] {};
\node(b1) at (4.2,-0.5) [vertex] {};
\node(b2) at (3.2,-0.5) [vertex] {};
\node(b3) at (3.7,0) [vertex] {};
\node(b4) at (4.2,0.5) [vertex] {};
\path(b0) edge (b2)(b0) edge (b3)(b0) edge (b4)(b1) edge (b2)(b1) edge (b3)(b1) edge (b4);
\node at (3.7,-1.2){$F_{2} = K_{2,3}$};\node(c0) at (6.4,0) [vertex] {};
\node(c1) at (6.9,0) [vertex] {};
\node(c2) at (7.4,0.5) [vertex] {};
\node(c3) at (7.9,0) [vertex] {};
\node(c4) at (7.4,-0.5) [vertex] {};
\node(c5) at (8.4,0) [vertex] {};
\path(c0) edge (c1)(c1) edge (c2)(c1) edge (c4)(c2) edge (c3)(c4) edge (c3)(c3) edge (c5);
\node at (7.4,-1.2){$F_{3} = \watch$};\node(d0) at (10.6,0.7) [vertex] {};
\node(d1) at (11.6,0.7) [vertex] {};
\node(d2) at (11.6,-0.3) [vertex] {};
\node(d3) at (10.6,-0.3) [vertex] {};
\node(d4) at (10.6,-0.7) [vertex] {};
\node(d5) at (11.6,-0.7) [vertex] {};
\path(d0) edge (d1)(d1) edge (d2)(d2) edge (d3)(d3) edge (d0)(d4) edge (d5);
\node at (11.1,-1.2){$F_{4} = \tv$};\node(e0) at (-0.375,-3.75) [vertex] {};
\node(e1) at (-0.375,-3.375) [vertex] {};
\node(e2) at (-0.375,-3) [vertex] {};
\node(e3) at (-0.375,-2.25) [vertex] {};
\node(e4) at (0.375,-2.25) [vertex] {};
\node(e5) at (0.375,-3) [vertex] {};
\path(e0) edge (e1)(e1) edge (e2)(e2) edge (e3)(e3) edge (e4)(e4) edge (e5)(e5) edge (e2);
\node at (0,-4.2){$F_{5} = \flag$};\node(f0) at (3.95,-3) [vertex] {};
\node(f1) at (3.45,-3.5) [vertex] {};
\node(f2) at (2.95,-3) [vertex] {};
\node(f3) at (3.45,-2.5) [vertex] {};
\node(f4) at (4.45,-2.75) [vertex] {};
\node(f5) at (4.45,-3.25) [vertex] {};
\path(f0) edge (f1)(f1) edge (f2)(f2) edge (f3)(f3) edge (f0)(f0) edge (f4)(f4) edge (f5)(f5) edge (f0);
\node at (3.7,-4.2){$F_{6} = \fish$};\node(g0) at (7.4,-2.5) [vertex] {};
\node(g1) at (7.025,-3) [vertex] {};
\node(g2) at (7.775,-3) [vertex] {};
\node(g3) at (6.65,-3.5) [vertex] {};
\node(g4) at (7.4,-3.5) [vertex] {};
\node(g5) at (8.15,-3.5) [vertex] {};
\path(g1) edge (g2)(g3) edge (g4)(g4) edge (g5);
\node at (7.4,-4.2){$F_{7} = P_0\cup P_1\cup P_2$};\node(h0) at (10.6,-3.15) [vertex] {};
\node(h1) at (10.6,-2.45) [vertex] {};
\node(h2) at (11.6,-2.45) [vertex] {};
\node(h3) at (11.6,-3.15) [vertex] {};
\node(h4) at (11.6,-3.55) [vertex] {};
\node(h5) at (10.6,-3.55) [vertex] {};
\path(h0) edge (h1)(h1) edge (h2)(h2) edge (h3)(h4) edge (h5);
\node at (11.1,-4.2){$F_{8} = P_1\cup P_3$};\node(i0) at (0,-6.5625) [vertex] {};
\node(i1) at (0,-6.1875) [vertex] {};
\node(i2) at (0,-5.8125) [vertex] {};
\node(i3) at (-0.5,-5.4375) [vertex] {};
\node(i4) at (0.5,-5.4375) [vertex] {};
\node(i5) at (0.375,-6.1875) [vertex] {};
\path(i0) edge (i1)(i1) edge (i2)(i2) edge (i3)(i3) edge (i4)(i4) edge (i2);
\node at (0,-7.2){$F_{9}$};\node(j0) at (3.7,-6.5625) [vertex] {};
\node(j1) at (3.7,-6.1875) [vertex] {};
\node(j2) at (3.7,-5.8125) [vertex] {};
\node(j3) at (3.2,-5.4375) [vertex] {};
\node(j4) at (4.2,-5.4375) [vertex] {};
\node(j5) at (4.075,-6.1875) [vertex] {};
\path(j0) edge (j1)(j1) edge (j2)(j2) edge (j3)(j3) edge (j4)(j4) edge (j2)(j2) edge (j5);
\node at (3.7,-7.2){$F_{10}$};\node(k0) at (6.9,-6.1875) [vertex] {};
\node(k1) at (6.9,-5.4375) [vertex] {};
\node(k2) at (7.4,-5.8125) [vertex] {};
\node(k3) at (7.9,-6.1875) [vertex] {};
\node(k4) at (7.9,-5.4375) [vertex] {};
\node(k5) at (7.4,-6.5625) [vertex] {};
\path(k0) edge (k1)(k1) edge (k2)(k2) edge (k3)(k3) edge (k4)(k4) edge (k2)(k2) edge (k0);
\node at (7.4,-7.2){$F_{11}$};\node(l0) at (10.6,-6.1875) [vertex] {};
\node(l1) at (10.6,-5.4375) [vertex] {};
\node(l2) at (11.1,-5.8125) [vertex] {};
\node(l3) at (11.6,-6.1875) [vertex] {};
\node(l4) at (11.6,-5.4375) [vertex] {};
\node(l5) at (11.1,-6.5625) [vertex] {};
\path(l0) edge (l1)(l1) edge (l2)(l2) edge (l3)(l3) edge (l4)(l4) edge (l2)(l2) edge (l0)(l2) edge (l5);
\node at (11.1,-7.2){$F_{12}$};\node(m0) at (0,-8.429709) [vertex] {};
\node(m1) at (0.469099,-8.655615) [vertex] {};
\node(m2) at (0.584957,-9.163222) [vertex] {};
\node(m3) at (0.26033,-9.570291) [vertex] {};
\node(m4) at (-0.26033,-9.570291) [vertex] {};
\node(m5) at (-0.584957,-9.163222) [vertex] {};
\node(m6) at (-0.469099,-8.655615) [vertex] {};
\path(m0) edge (m1)(m1) edge (m2)(m2) edge (m3)(m3) edge (m4)(m4) edge (m5)(m5) edge (m6)(m6) edge (m0);
\node at (0,-10.2){$F_{13} = C_7$};\node(n0) at (4,-8.480385) [vertex] {};
\node(n1) at (4.3,-9) [vertex] {};
\node(n2) at (4,-9.519615) [vertex] {};
\node(n3) at (3.4,-9.519615) [vertex] {};
\node(n4) at (3.1,-9) [vertex] {};
\node(n5) at (3.4,-8.480385) [vertex] {};
\node(n6) at (3.7,-9) [vertex] {};
\path(n0) edge (n1)(n1) edge (n2)(n2) edge (n3)(n3) edge (n4)(n4) edge (n5)(n6) edge (n1)(n6) edge (n2)(n6) edge (n3)(n6) edge (n4);
\node at (3.7,-10.2){$F_{14}$};\node(o0) at (7.7,-8.480385) [vertex] {};
\node(o1) at (8,-9) [vertex] {};
\node(o2) at (7.7,-9.519615) [vertex] {};
\node(o3) at (7.1,-9.519615) [vertex] {};
\node(o4) at (6.8,-9) [vertex] {};
\node(o5) at (7.1,-8.480385) [vertex] {};
\node(o6) at (7.4,-9) [vertex] {};
\path(o0) edge (o1)(o1) edge (o2)(o2) edge (o3)(o3) edge (o4)(o4) edge (o5)(o6) edge (o1)(o6) edge (o2)(o6) edge (o3)(o6) edge (o4)(o6) edge (o5);
\node at (7.4,-10.2){$F_{15}$};\node(p0) at (11.4,-8.480385) [vertex] {};
\node(p1) at (11.7,-9) [vertex] {};
\node(p2) at (11.4,-9.519615) [vertex] {};
\node(p3) at (10.8,-9.519615) [vertex] {};
\node(p4) at (10.5,-9) [vertex] {};
\node(p5) at (10.8,-8.480385) [vertex] {};
\node(p6) at (11.1,-9) [vertex] {};
\path(p0) edge (p1)(p1) edge (p2)(p2) edge (p3)(p3) edge (p4)(p4) edge (p5)(p6) edge (p0)(p6) edge (p1)(p6) edge (p2)(p6) edge (p3)(p6) edge (p4)(p6) edge (p5);
\node at (11.1,-10.2){$F_{16}$};\node(q0) at (0.3,-11.480385) [vertex] {};
\node(q1) at (0.6,-12) [vertex] {};
\node(q2) at (0.3,-12.519615) [vertex] {};
\node(q3) at (-0.3,-12.519615) [vertex] {};
\node(q4) at (-0.6,-12) [vertex] {};
\node(q5) at (-0.3,-11.480385) [vertex] {};
\node(q6) at (0,-12) [vertex] {};
\path(q0) edge (q1)(q1) edge (q2)(q2) edge (q3)(q3) edge (q4)(q4) edge (q5)(q5) edge (q0);
\node at (0,-13.2){$F_{17} = C_6\cup P_0$};\node(r0) at (4,-11.480385) [vertex] {};
\node(r1) at (4.3,-12) [vertex] {};
\node(r2) at (4,-12.519615) [vertex] {};
\node(r3) at (3.4,-12.519615) [vertex] {};
\node(r4) at (3.1,-12) [vertex] {};
\node(r5) at (3.4,-11.480385) [vertex] {};
\node(r6) at (3.7,-12) [vertex] {};
\path(r0) edge (r1)(r1) edge (r2)(r2) edge (r3)(r3) edge (r4)(r4) edge (r5)(r5) edge (r0)(r6) edge (r1)(r6) edge (r2)(r6) edge (r3)(r6) edge (r4)(r6) edge (r5);
\node at (3.7,-13.2){$F_{18}$};\node(s0) at (7.7,-11.480385) [vertex] {};
\node(s1) at (8,-12) [vertex] {};
\node(s2) at (7.7,-12.519615) [vertex] {};
\node(s3) at (7.1,-12.519615) [vertex] {};
\node(s4) at (6.8,-12) [vertex] {};
\node(s5) at (7.1,-11.480385) [vertex] {};
\node(s6) at (7.4,-12) [vertex] {};
\path(s0) edge (s1)(s1) edge (s2)(s2) edge (s3)(s3) edge (s4)(s4) edge (s5)(s5) edge (s0)(s6) edge (s0)(s6) edge (s1)(s6) edge (s2)(s6) edge (s3)(s6) edge (s4)(s6) edge (s5);
\node at (7.4,-13.2){$F_{19} = W_6$};\node(t0) at (11.1,-11.429709) [vertex] {};
\node(t1) at (11.569099,-11.655615) [vertex] {};
\node(t2) at (11.684957,-12.163222) [vertex] {};
\node(t3) at (11.36033,-12.570291) [vertex] {};
\node(t4) at (10.83967,-12.570291) [vertex] {};
\node(t5) at (10.515043,-12.163222) [vertex] {};
\node(t6) at (10.630901,-11.655615) [vertex] {};
\path(t0) edge (t2)(t2) edge (t5)(t5) edge (t0)(t0) edge (t1)(t1) edge (t2)(t2) edge (t3)(t3) edge (t4)(t4) edge (t5)(t5) edge (t6)(t6) edge (t0);
\node at (11.1,-13.2){$F_{20}$};\node(u0) at (0.9,-15) [vertex] {};
\node(u1) at (0.3,-15) [vertex] {};
\node(u2) at (0,-15.519615) [vertex] {};
\node(u3) at (-0.6,-15.519615) [vertex] {};
\node(u4) at (-0.9,-15) [vertex] {};
\node(u5) at (0,-14.480385) [vertex] {};
\node(u6) at (-0.6,-14.480385) [vertex] {};
\path(u0) edge (u1)(u1) edge (u2)(u2) edge (u3)(u3) edge (u4)(u4) edge (u5)(u5) edge (u6)(u6) edge (u3)(u3) edge (u5)(u5) edge (u1);
\node at (0,-16.2){$F_{21}$};\node(v0) at (3.7,-14.5) [vertex] {};
\node(v1) at (4.175528,-14.845492) [vertex] {};
\node(v2) at (3.993893,-15.404508) [vertex] {};
\node(v3) at (3.406107,-15.404508) [vertex] {};
\node(v4) at (3.224472,-14.845492) [vertex] {};
\node(v5) at (3.2,-14.2) [vertex] {};
\node(v6) at (4.2,-14.2) [vertex] {};
\node(v7) at (4.2,-15.8) [vertex] {};
\node(v8) at (3.2,-15.8) [vertex] {};
\path(v0) edge (v1)(v0) edge (v2)(v0) edge (v3)(v0) edge (v4)(v1) edge (v2)(v1) edge (v3)(v1) edge (v4)(v2) edge (v3)(v2) edge (v4)(v3) edge (v4)(v0) edge (v5)(v0) edge (v6)(v7) edge (v8)(v7) edge (v1)(v7) edge (v2)(v8) edge (v3)(v8) edge (v4);
\node at (3.7,-16.2){$F_{22}$};\node(w0) at (6.65,-14.25) [vertex] {};
\node(w1) at (7.4,-14.25) [vertex] {};
\node(w2) at (8.15,-14.25) [vertex] {};
\node(w3) at (6.65,-15) [vertex] {};
\node(w4) at (7.4,-15) [vertex] {};
\node(w5) at (8.15,-15) [vertex] {};
\node(w6) at (6.65,-15.75) [vertex] {};
\node(w7) at (7.4,-15.75) [vertex] {};
\node(w8) at (8.15,-15.75) [vertex] {};
\path(w0) edge (w1)(w1) edge (w2)(w3) edge (w4)(w4) edge (w5)(w6) edge (w7)(w7) edge (w8)(w0) edge (w3)(w3) edge (w6)(w1) edge (w4)(w4) edge (w7)(w2) edge (w5)(w5) edge (w8);
\draw(w3) .. controls (7.4,-14.75) and (7.4,-14.75) .. (w5);
\draw(w2) .. controls (7.9,-15) and (7.9,-15) .. (w8);
\draw(w0) .. controls (7.4,-14) and (7.4,-14) .. (w2);
\draw(w6) .. controls (7.4,-15.5) and (7.4,-15.5) .. (w8);
\draw(w1) .. controls (7.15,-15) and (7.15,-15) .. (w7);
\draw(w0) .. controls (6.4,-15) and (6.4,-15) .. (w6);
\node at (7.4,-16.2){$F_{23} = L(K_{3,3})$};
\end{tikzpicture}
\caption{The family $\mathcal{F}$: these 23 graphs, and their
  complements, are the minimal forbidden induced subgraphs for
  double-split graphs.  Only $F_1 = C_5$ and $F_{23} = L(K_{3,3})$ are
  self-complementary.}
\label{fig:double-split}
\end{figure}}
\begin{document}
\begin{abstract}
  In the course of proving the strong perfect graph theorem, Chudnovsky, Robertson, Seymour, and Thomas showed that every perfect graph either belongs to one of five basic classes or admits one of several decompositions.  Four of the basic classes are closed under taking induced subgraphs (and have known forbidden subgraph characterizations), while the fifth one, consisting of double-split graphs, is not.

A graph is doubled if it is an induced subgraph of a double-split graph.  We find the forbidden induced subgraph characterization of doubled graphs; it contains 44 graphs.
\end{abstract}
\maketitle

\section{Introduction}

A key ingredient in the proof of the strong perfect graph theorem by
Chudnovsky, Robertson, Seymour, and Thomas~\cite{spgt} is a
decomposition theorem for all perfect graphs.  This decomposition
theorem states that all perfect graphs either belong to one of five
basic classes or admit one of several decompositions.  The five basic
classes are bipartite graphs, complements of bipartite graphs, line
graphs of bipartite graphs, complements of line graphs of bipartite
graphs, and double-split graphs. The first four classes are closed under
taking induced subgraphs and have known characterizations in terms of
minimal forbidden induced subgraphs.  Indeed, a forbidden
induced subgraph characterization is known for the union of these
four classes~\cite{zver}.  However, double-split graphs are not closed under taking induced subgraphs, and hence do not have such a characterization.

In this paper, we consider the downward closure of double-split graphs
under induced subgraphs (that is, double-split graphs and all of their
induced subgraphs) and we characterize this class in terms of minimal
forbidden induced subgraphs.  Unlike the lists for the other four basic
classes, the one for this class of graphs is finite.

All graphs considered in this paper are finite and have no loops or
multiple edges.  For a graph $G$ we denote its vertex set by $V(G)$ and
its edge set by $E(G)$.  The complement of $G$ is denoted by $\co{G}$.
A \emph{clique} in a graph $G$ is a set of vertices all pairwise
adjacent and a \emph{stable set} is a clique in $\co{G}$.  For $A
\subseteq V(G)$, we denote the subgraph of $G$ induced on $A$ by $G|A$,
sometimes further abbreviating $G|\{u,v,w\}$ by $G|uvw$.  The notation
$G\iso H$ means $G$ is isomorphic to $H$.  For $v \in V(G)$, we denote
the set of neighbors of $v$ in $G$ by $N_G(v)$ and for $X \subseteq
V(G)$, we denote by $N_X(v)$ the set of neighbors of $v$ in $G|X$.

Let $X,Y \subseteq V(G)$ with $X \cap Y=\emptyset$.  We say that $X$ and
$Y$ are \emph{complete} to each other if every vertex of $X$ is adjacent
to every vertex of $Y$, and we say that they are \emph{anticomplete} if
no vertex of $X$ is adjacent to a member of $Y$.  For an integer $i \geq
0$, let $P_i,C_i$ denote the path and cycle with $i$ edges,
respectively.

For integers $a,b \geq 0$, let $M_{a,b}$ be the graph on $2a+b$ vertices
consisting of the disjoint union of $a$ edges and $b$ isolated vertices.
We say that a graph $G$ is \emph{semi-matched} if it is isomorphic to
$M_{a,b}$ for some $a,b \geq 0$ and we say that it is \emph{matched} if
in addition $b=0$.  Similarly, we say that $G$ is
\emph{semi-antimatched} if it is isomorphic to some $\co{M_{a,b}}$ and
\emph{antimatched} if in addition $b=0$.

Let $A,B \subseteq V(G)$ such that $A \cap B = \emptyset$, $A$ is
semi-matched, and $B$ is semi-antimatched.  We say that $A$ and $B$ are
\emph{aligned} if the following holds:
{\figurehere}
\begin{itemize}
\item for all adjacent $u,v \in A$ and all $w \in B$, $w$ is adjacent to
  exactly one of $u$ and $v$
\item for all $u \in A$ and non-adjacent $x,y \in B$, $u$ is adjacent to
  exactly one of $x$ and $y$.
\end{itemize}

A graph $G$ is \emph{split} if its vertex set $V(G)$ can be partitioned
into a clique and a stable set.  A graph $G$ is \emph{double-split} if
its vertex set $V(G)$ can be partitioned into two sets, $A$ and $B$,
such that the following holds:
\begin{itemize}
\item $G|A$ is matched,
\item $G|B$ is antimatched, and
\item $A$ and $B$ are aligned.
\end{itemize}

It is easy to see that every split graph is an induced subgraph of many
double-split graphs.  Also, every induced subgraph of a split graph is
also split.  Split graphs have a well-known forbidden induced subgraphs
characterization:

\begin{theorem}\label{split}[Foldes and Hammer \cite{fh}]
  A graph is split if and only if it does not contain $C_4, \co{C_4}$,
  or $C_5$ as an induced subgraph.
\end{theorem}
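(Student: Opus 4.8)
The plan is to prove both implications, the forward one being routine and the converse carrying all the content. For the forward direction, since split graphs are closed under taking induced subgraphs, it suffices to verify that none of $C_4$, $\co{C_4}$, $C_5$ is itself split. Each of these has both clique number and stability number equal to $2$. Hence a split partition $(K,S)$ would need $|K|+|S|=|V(G)|$ with $|K|\le 2$ and $|S|\le 2$. For $C_5$ this is already impossible since $4<5$. For $C_4$ and $\co{C_4}$ the only option is $|K|=|S|=2$, but deleting any edge (a clique of size $2$) from either graph leaves an adjacent pair, so $S$ cannot be stable. Thus none of the three is split, and the forward direction follows.

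For the converse, assume $G$ contains no induced $C_4$, $\co{C_4}$, or $C_5$. I would pick a clique $K$ of maximum size, and among all such maximum cliques choose one minimizing the number of edges of $G|(V(G)\setminus K)$. Writing $S=V(G)\setminus K$, the goal is to show $S$ is stable, since then $(K,S)$ is the desired split partition. Suppose instead that $S$ has an edge $uv$. By maximality of $K$, each of $u$ and $v$ has a non-neighbor in $K$. The first claim is that $u$ and $v$ share a \emph{common} non-neighbor $c\in K$: otherwise, choosing non-neighbors $u',v'\in K$ of $u$ and $v$ respectively and examining $G|\{u,v,u',v'\}$, the subcase where both cross pairs are adjacent yields an induced $C_4$ and the subcase where neither is adjacent yields an induced $\co{C_4}$, both contradictions.

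With such a $c$ in hand, I would exploit the extremal choice of $K$. Performing the swap $K\mapsto (K\setminus\{c\})\cup\{u\}$ (which produces another maximum clique, arranging that $c$ is $u$'s unique non-neighbor in $K$, or choosing the edge of $S$ to make this so) and comparing edge counts of the two complements forces $c$ to have a neighbor $d\in S\setminus\{u,v\}$. Now I would analyze $G|\{u,v,c,d\}$: the case in which $d$ is adjacent to neither $u$ nor $v$ gives an induced $\co{C_4}$, while the remaining cases leave an induced path on $\{u,v,c,d\}$, which I would extend to an induced $C_5$ using a fifth vertex of $K$ supplied by maximality. Each outcome contradicts the hypothesis, so $S$ is stable.

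The main obstacle is precisely this last step. The path configuration (roughly $v\!-\!u\!-\!d\!-\!c$) is not itself forbidden, so $C_5$-freeness must be invoked, and the crux is producing the fifth vertex that closes the path into an induced five-cycle. This is exactly the point where the argument must fail for $G=C_5$ itself, so it is where the full strength of excluding all three graphs, together with the extremal (edge-minimizing) choice of the maximum clique, has to be used. I expect the bookkeeping of the swap and the location of the fifth vertex to be the delicate part; a convenient simplification throughout is that the forbidden family $\{C_4,\co{C_4},C_5\}$ and the class of split graphs are both self-complementary, so symmetric subcases can be dispatched by passing to $\co{G}$.
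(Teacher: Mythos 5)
The paper itself gives no proof of this statement; it is quoted as a known theorem of Foldes and Hammer, so your proposal has to be judged on its own terms. Your skeleton is the standard exchange argument: a maximum clique $K$ chosen among all maximum cliques to minimize the number of edges of $G|(V(G)\setminus K)$, the common non-neighbor claim, and the edge-counting swap. The forward direction is fine, the common non-neighbor $c$ exists as you claim, and (granting the swap is legitimate) the counting does produce $d\in S\setminus\{u,v\}$ adjacent to $c$. One fillable gap: the swap $(K\setminus\{c\})\cup\{u\}$ is a clique only if $c$ is $u$'s \emph{unique} non-neighbor in $K$, and you merely say this can be "arranged". It can, but it requires an argument you omit: two common non-neighbors of $u,v$ would give an induced $\co{C_4}$, and a non-neighbor of $u$ alone together with a non-neighbor of $v$ alone would give an induced $C_4$; hence $u$ and $v$ have exactly one common non-neighbor and at least one of them (relabel it $u$) is adjacent to all of $K\setminus\{c\}$.

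The genuine gap is in your final case analysis of $G|\{u,v,c,d\}$. First, the case where $d$ is adjacent to both $u$ and $v$ does not "leave an induced path": it induces a triangle $uvd$ with pendant edge $dc$, and your argument says nothing about it. Second, and fatally for the stated method, in the case $d$ adjacent to $u$ but not to $v$ the induced path is $v$--$u$--$d$--$c$, and a fifth vertex closing it into an induced $C_5$ must be adjacent to the ends $v,c$ and non-adjacent to the internal vertices $u,d$; but after your normalization \emph{every} vertex of $K\setminus\{c\}$ is adjacent to $u$, so no vertex of $K$ can serve, and "a fifth vertex of $K$ supplied by maximality" does not exist. (The fifth-vertex idea works only in the case $d$ adjacent to $v$ but not $u$: maximality gives $b\in K$ adjacent to $u$ but not $v$, and then $b\not\sim d$ yields an induced $C_5$ in the cyclic order $u,v,d,c,b$, while $b\sim d$ yields an induced $C_4$ in the cyclic order $u,v,d,b$ --- a dichotomy your sketch also elides.) Both problematic cases have a clean repair that your proposal misses: in each of them $ud$ is an edge of $G|S$, so your own first claim, applied to the edge $ud$ rather than $uv$, demands a common non-neighbor of $u$ and $d$ in $K$; since $c$ is the only non-neighbor of $u$ in $K$ and $c$ is adjacent to $d$, none exists, a contradiction. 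With the normalization made explicit and this reapplication of the claim added, the proof goes through.
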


In this paper we consider a class of graphs that includes both split and
double-split graphs.  We say a graph $G$ is \emph{doubled} if there
exists a double-split graph $H$ that contains $G$ as an induced
subgraph.  Notice that a graph $G$ is double-split if and only if
$\co{G}$ is double-split, and hence a graph $G$ is doubled if and only
if $\co{G}$ is doubled. The main result of this paper is the following:

\begin{theorem}\label{dsplit}
A graph is doubled if and only if it does not contain any graphs in
$\mathcal{F}$, the family of graphs illustrated in
Figure~\ref{fig:double-split}.
\end{theorem}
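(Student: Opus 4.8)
The plan is to first replace the extrinsic definition of ``doubled'' (being an induced subgraph of \emph{some} double-split graph) by an intrinsic one that can be tested on $G$ alone. Call a partition $(A,B)$ of $V(G)$ \emph{good} if $G|A$ is semi-matched, $G|B$ is semi-antimatched, and $A$ and $B$ are aligned; equivalently, since $M_{a,b}$ is precisely a graph of maximum degree at most one, if (i) every vertex of $A$ has at most one neighbour in $A$, (ii) every vertex of $B$ has at most one non-neighbour in $B$, (iii) for every edge $uv$ of $G|A$ each vertex of $B$ is adjacent to exactly one of $u,v$, and (iv) for every non-edge $xy$ of $G|B$ each vertex of $A$ is adjacent to exactly one of $x,y$. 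I would first prove the reformulation: \emph{$G$ is doubled if and only if $G$ has a good partition.} One direction is immediate, since the trace on $V(G)$ of a double-split partition of a supergraph $H$ is a good partition — passing to subsets preserves ``maximum degree $\le 1$'', ``maximum non-degree $\le 1$'', and both alignment conditions (these only quantify over edges of $A$ and non-edges of $B$, which can only disappear under restriction). For the converse, given a good partition I would complete $G|A$ to a matching and $G|B$ to an antimatching by adding, for each isolated vertex $u$ of $G|A$, a partner $u''$ whose neighbourhood meets $B$ in exactly the non-neighbours of $u$ (and is otherwise anticomplete to $A$), and dually inside $B$; a short check of (i)--(iv) shows the enlarged graph is double-split.

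With this tool the forward (``only if'') direction is a finite verification: it suffices to show that no graph in $\mathcal F$ has a good partition, because doubled graphs are closed under induced subgraphs, so any $G$ containing some $F_i$ that was itself doubled would force $F_i$ to be doubled. Since doubledness is preserved under complementation and $\mathcal F$ is closed under complementation, I only need to treat the $23$ graphs drawn in Figure~\ref{fig:double-split}. For each I would rule out every candidate good partition $(A,B)$ using (i)--(iv): a vertex of degree two allows at most one incident edge into $A$, a triangle meets $A$ in at most one edge, $K_{2,3}$- and $C_5$-type configurations violate the alignment conditions, and so on, each argument being a bounded case split on a five-, six-, or seven-vertex graph.

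The substance of the theorem is the backward (``if'') direction, which I would prove contrapositively: if $G$ has no good partition, then $G$ contains a member of $\mathcal F$. I would take $G$ vertex-minimal with no good partition, so every proper induced subgraph is doubled, and show $G\in\mathcal F$. The idea is to use minimality to control the neighbourhood of a single vertex $v$: deleting $v$ leaves a good partition $(A',B')$, and the failure to extend it (placing $v$ in $A$ or in $B$ and repairing the matchings and alignments) records exactly which of (i)--(iv) obstruct, each obstruction being an explicit small configuration on $v$ and a few vertices of $A'\cup B'$. Combining the obstructions arising from different $v$, and invoking the Foldes--Hammer characterization (Theorem~\ref{split}) to dispose of the split case and to locate where the genuinely double-split structure is first needed, I would progressively force the adjacencies of $G$ until it coincides with one of the listed graphs.

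I expect the backward direction to be the main obstacle, for two reasons. First, the alignment conditions interact: fixing the side of one vertex simultaneously constrains the sides available to its neighbours and its non-neighbours, so the case analysis does not factor cleanly and must be organized by the pattern of a vertex's neighbourhood relative to a partially built good partition. Second, and more seriously, establishing that $\mathcal F$ is \emph{complete} (no minimal obstruction is missed) and \emph{minimal} (each $F_i$ has every proper induced subgraph doubled) demands an essentially exhaustive treatment of the small cases; keeping this enumeration finite and verifiably complete — rather than any single clever step — is where the real difficulty lies, and it is plausible that part of this verification is best delegated to a computer search over small graphs.
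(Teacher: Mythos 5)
Your intrinsic reformulation of ``doubled'' (existence of a partition $(A,B)$ with $G|A$ semi-matched, $G|B$ semi-antimatched, and $A,B$ aligned) is correct, and the completion-by-partner-vertices argument does work; the paper uses this equivalence implicitly in each of its structural lemmas. Your ``only if'' direction (a finite check that no member of $\mathcal{F}$ admits such a partition) also matches the paper. The gap is in the ``if'' direction, which is the entire substance of the theorem: what you offer there is a plan, not a proof, and the plan has a circular step. You take a vertex-minimal non-doubled $G$, note that each $G-v$ has a good partition, and propose to ``combine the obstructions'' to extending these partitions until $G$ is forced to lie in $\mathcal{F}$, conceding that completeness of $\mathcal{F}$ may have to be ``delegated to a computer search over small graphs.'' But nothing in your argument bounds the size of a minimal non-doubled graph, and that bound is precisely what is at stake: hereditary classes can have infinitely many, arbitrarily large, minimal obstructions (the other four basic classes of the strong perfect graph theorem do, as the paper points out). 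A search over small graphs verifies nothing unless you first prove every minimal obstruction is small. Moreover, $G$ fails to be doubled only when \emph{every} good partition of \emph{every} $G-v$ fails to extend; your sketch quantifies over one partition of one $G-v$ at a time, and gives no mechanism for organizing the interaction between different partitions, which is exactly where the difficulty you yourself identify (the alignment conditions ``do not factor cleanly'') lives.

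The paper supplies the missing mechanism, and the contrast is instructive. It interposes a class between split and doubled: the \emph{almost-split} graphs, whose forbidden-subgraph list (the ``circus'') is obtained by growing structure around an induced $C_4$, starting from Foldes--Hammer. Exactly six circus graphs, up to complementation, are doubled but not almost-split: $M_{2,1}$, $P_5$, $C_6$, $\domino$, $\tent_1$, $\tent_2$. For each of these six, a separate theorem shows that any graph containing it and excluding $\mathcal{F}$ is doubled, by anchoring on the fixed five- or six-vertex copy, classifying all remaining vertices by their neighborhoods into the anchor, and constructing a good partition outright; every excluded configuration is exhibited as a concrete member of $\mathcal{F}$ on at most nine vertices. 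The anchor rigidifies the candidate partition (circumventing the ambiguity that a graph such as $C_4$ is doubled in two inequivalent ways, which is the paper's stated reason for introducing almost-split graphs), so no minimality argument and no size bound are ever needed. To rescue your approach you would have to prove an a priori bound on minimal obstructions, or restructure the case analysis around such anchors---at which point you would be reproducing the paper's proof.
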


It follows that $\mathcal{F}$ is the list of minimal forbidden induced
subgraphs for double-split graphs.  The idea for our proof of \ref{dsplit} is as follows.  To prove the "if" part of
\ref{dsplit}, we assume that $G$ is not split, hence
contains one of $C_4$, $\co{C_4}$, and $C_5$.  Since $C_5$ is in
$\mathcal{F}$ and the class of doubled graphs is self complementary, we
may assume that $G$ has $C_4$ as an induced subgraph.  However, since $C_4$ is a doubled
graph in two different ways (all four vertices can appear on the
anti-matched side or 2 vertices can appear on the matched side and the other 2 vertices on the
semi-antimatched side),  there is no easy procedure to partition the remaining vertices of the graph.  To avoid this obstacle, we introduce another class of graphs that
lies inbetween the class of split graphs and the class of doubled graphs.
In section 2, we find the forbidden induced subgraph characterization for this class and we use this characterization to prove \ref{dsplit} in section 3.

\section{Almost-split graphs}

We say a graph $G$ is {\it almost-split} if $G$ is doubled and there
exists $v \in V(G)$ such that $G | (V(G) \setminus \{v\}$) is split. In
other words, $G$ is almost-split if there is at most one pair matched or
antimatched.  Note that every split graph is almost-split and every
almost-split graph is doubled.  In this section we present the list of
forbidden induced subgraphs for the class of almost-split graphs.

\begin{theorem}\label{asplit}
A graph is almost-split if and only if it does not contain any graphs in
the circus, the list of graphs illustrated in Figure~\ref{fig:asplit} along with their complements.
\end{theorem}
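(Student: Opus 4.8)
The plan is to prove both directions after first recording two easy structural facts about almost-split graphs. The class is \emph{hereditary}: if $H$ is an induced subgraph of an almost-split graph $G$, then $H$ is doubled (being an induced subgraph of the same double-split graph as $G$) and, using that split graphs are closed under induced subgraphs, $H$ also has a vertex whose deletion leaves a split graph, so $H$ is almost-split. The class is also \emph{self-complementary}, since $G$ is doubled iff $\co{G}$ is and $G-v$ is split iff $\co{G}-v$ is. These two facts are what let the circus be taken ``along with their complements'' and reduce the theorem to the two statements below.

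For the \emph{necessity} direction I would show that every graph in the circus fails to be almost-split; by heredity this yields that no almost-split graph contains a member of the circus. The only tool needed here is the easy implication that an almost-split graph $G$ always admits a vertex $v$ with $G-v$ split. Contrapositively, it suffices to check for each graph $F$ of Figure~\ref{fig:asplit} and each complement that $F-v$ is non-split for \emph{every} $v$, i.e.\ that $F-v$ always contains one of $C_4,\co{C_4},C_5$ by Theorem~\ref{split}. This is a finite, if tedious, verification.

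The substance is the \emph{sufficiency} direction: assuming $G$ contains no member of the circus, I must produce the almost-split structure. If $G$ omits $C_4,\co{C_4}$ and $C_5$ then $G$ is split by Theorem~\ref{split}, hence almost-split; so I may assume one of them occurs, and since $C_5$ lies in the circus and the circus is closed under complementation, I may assume $G$ contains an induced $C_4$. I now aim to exhibit a single ``matched pair'': an edge $a_1a_2$ lying in no triangle, so that every other vertex is adjacent to \emph{at most} one of $a_1,a_2$. Writing $B$ for the vertices adjacent to exactly one of $a_1,a_2$ and $A_0$ for those adjacent to neither, the triple $(\{a_1,a_2\},A_0,B)$ realizes $G$ as almost-split precisely when $B$ is a clique and $A_0$ a stable set: then $A=\{a_1,a_2\}\cup A_0$ is semi-matched (its only edge is $a_1a_2$, as $a_1,a_2$ are anticomplete to $A_0$), $B$ is semi-antimatched, the pair $(A,B)$ is aligned because the unique adjacent pair of $A$ meets each vertex of $B$ exactly once while $B$ has no non-adjacent pair, and there is exactly one matched pair.

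Thus sufficiency reduces to the crux lemma: \emph{if $G$ contains no member of the circus and contains a $C_4$, then some edge $a_1a_2$ lies in no triangle and yields a clique $B$ and stable set $A_0$ as above} (after complementation, the analogous one-antimatched-pair structure). I would prove this by examining the edges of the fixed $C_4$ and showing that if none of them yields the desired structure then $G$ contains a circus graph, ruling out in turn a common neighbor of $a_1$ and $a_2$, a non-adjacent pair inside $B$, and an adjacent pair inside $A_0$. In each case the offending vertices, combined with $a_1,a_2$ and enough of the ambient $C_4$, induce a small graph I must identify as lying in the circus. \textbf{The main obstacle} is precisely this case analysis: a single local violation, such as a non-edge inside $B$, is typically not itself forbidden, so the argument must assemble it together with the surrounding structure into a genuine obstruction and verify that every configuration that arises indeed lands in the circus. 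Organizing these cases so that they are exhaustive, and thereby confirming that the circus of Figure~\ref{fig:asplit} is complete, is where essentially all the work resides.
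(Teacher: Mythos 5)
The fatal problem is in your necessity direction: the criterion you propose to verify --- that $F - v$ is non-split for every vertex $v$ of every circus graph $F$ --- is simply false. Delete from $M_{2,1}$ an endpoint of one of its two edges: what remains is a single edge plus two isolated vertices, which is split (the edge is the clique, the other two vertices the stable set). The same failure occurs for $\co{M_{2,1}} = W_4$ (delete a rim vertex), for $\tent_1$ (delete the unique path vertex non-adjacent to the apex), and for $\tent_2$ (delete a common neighbor of the apex and the middle path vertex). Worse, these graphs are doubled --- Section 3 shows that each of the six ``doubled'' circus graphs $M_{2,1}$, $P_5$, $C_6$, $\domino$, $\tent_1$, $\tent_2$ contains no member of $\mathcal{F}$ and hence is doubled --- so under the literal definition you adopted (``$G$ is doubled and some $G-v$ is split''), the graphs $M_{2,1}$, $\tent_1$, $\tent_2$ and their complements genuinely \emph{are} almost-split, and the statement you are trying to prove would be false. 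The definition that makes \ref{asplit} true, and the one the paper's argument actually uses, is the second formulation: $G$ admits an aligned partition into a semi-matched part and a semi-antimatched part with \emph{at most one matched-or-antimatched pair in total}. This is strictly stronger than having a split one-vertex deletion: for example, the only aligned partition of $M_{2,1}$ is the trivial one with both edges on the semi-matched side, i.e.\ two matched pairs. So your necessity argument must be replaced by a different finite check --- that no circus graph admits a one-pair aligned partition --- and cannot be salvaged in the form you give.

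On the sufficiency side your plan is sound and is in fact the paper's: reduce by \ref{split} and self-complementation to a graph containing $C_4$, and hunt for an edge of that $C_4$ that can serve as the unique matched pair, with the vertices seeing exactly one endpoint forming a clique and the vertices seeing neither forming a stable set. (Note that this target establishes the stronger one-pair definition of almost-split, which is why sufficiency is unaffected by the issue above.) The paper executes this by letting $A_i$ be the set of vertices with exactly $i$ neighbors in the $C_4$ (there is no $A_4$ since $W_4 \iso \co{M_{2,1}}$ is in the circus) and proving five claims that pin down $A_0,\dots,A_3$ and select the edge; the interaction between different candidate edges --- e.g.\ a common neighbor of two adjacent $C_4$-vertices is not an obstruction at all, it merely forces a different choice of edge --- is exactly what those claims organize. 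Since you explicitly defer this entire case analysis, which you yourself call ``essentially all the work,'' your proposal as written contains a correct skeleton for this half but not the proof, while the other half rests on a step that is concretely false.
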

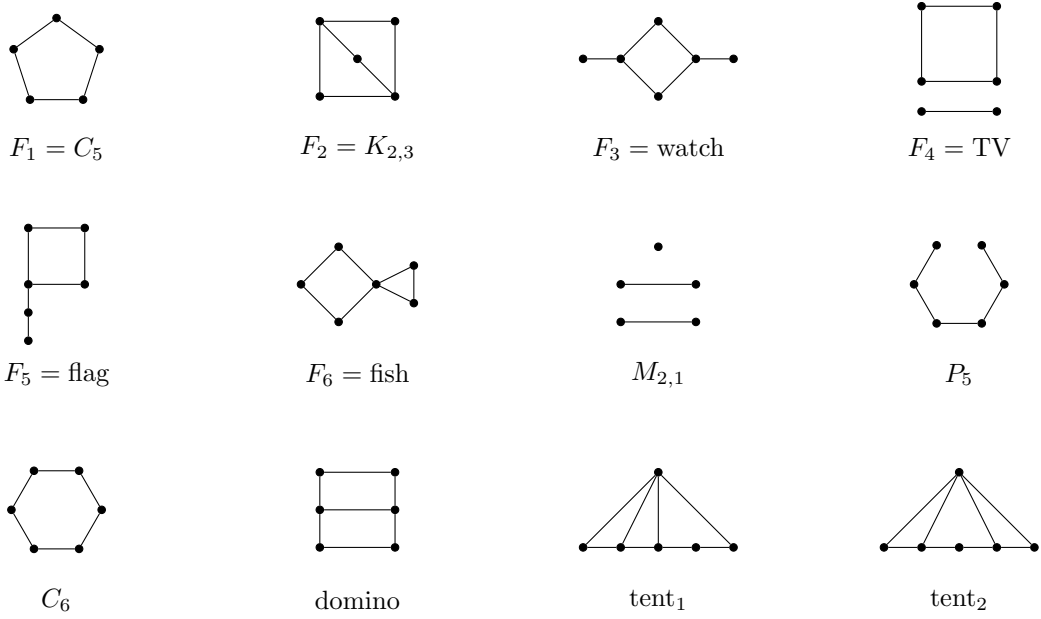
\begin{figure}[ht]
\begin{tikzpicture}
\node(x0) at (0,0.542705) [vertex] {};
\node(x1) at (0.570634,0.128115) [vertex] {};
\node(x2) at (0.352671,-0.542705) [vertex] {};
\node(x3) at (-0.352671,-0.542705) [vertex] {};
\node(x4) at (-0.570634,0.128115) [vertex] {};
\path(x0) edge (x1)(x1) edge (x2)(x2) edge (x3)(x3) edge (x4)(x4) edge (x0);
\node at (0,-1.2){$F_{1} = C_5$};\node(y0) at (3.5,0.5) [vertex] {};
\node(y1) at (4.5,-0.5) [vertex] {};
\node(y2) at (3.5,-0.5) [vertex] {};
\node(y3) at (4,0) [vertex] {};
\node(y4) at (4.5,0.5) [vertex] {};
\path(y0) edge (y2)(y0) edge (y3)(y0) edge (y4)(y1) edge (y2)(y1) edge (y3)(y1) edge (y4);
\node at (4,-1.2){$F_{2} = K_{2,3}$};\node(z0) at (7,0) [vertex] {};
\node(z1) at (7.5,0) [vertex] {};
\node(z2) at (8,0.5) [vertex] {};
\node(z3) at (8.5,0) [vertex] {};
\node(z4) at (8,-0.5) [vertex] {};
\node(z5) at (9,0) [vertex] {};
\path(z0) edge (z1)(z1) edge (z2)(z1) edge (z4)(z2) edge (z3)(z4) edge (z3)(z3) edge (z5);
\node at (8,-1.2){$F_{3} = \text{watch}$};\node(aa0) at (11.5,0.7) [vertex] {};
\node(aa1) at (12.5,0.7) [vertex] {};
\node(aa2) at (12.5,-0.3) [vertex] {};
\node(aa3) at (11.5,-0.3) [vertex] {};
\node(aa4) at (11.5,-0.7) [vertex] {};
\node(aa5) at (12.5,-0.7) [vertex] {};
\path(aa0) edge (aa1)(aa1) edge (aa2)(aa2) edge (aa3)(aa3) edge (aa0)(aa4) edge (aa5);
\node at (12,-1.2){$F_{4} = \text{TV}$};\node(ab0) at (-0.375,-3.75) [vertex] {};
\node(ab1) at (-0.375,-3.375) [vertex] {};
\node(ab2) at (-0.375,-3) [vertex] {};
\node(ab3) at (-0.375,-2.25) [vertex] {};
\node(ab4) at (0.375,-2.25) [vertex] {};
\node(ab5) at (0.375,-3) [vertex] {};
\path(ab0) edge (ab1)(ab1) edge (ab2)(ab2) edge (ab3)(ab3) edge (ab4)(ab4) edge (ab5)(ab5) edge (ab2);
\node at (0,-4.2){$F_{5} = \text{flag}$};\node(ac0) at (4.25,-3) [vertex] {};
\node(ac1) at (3.75,-3.5) [vertex] {};
\node(ac2) at (3.25,-3) [vertex] {};
\node(ac3) at (3.75,-2.5) [vertex] {};
\node(ac4) at (4.75,-2.75) [vertex] {};
\node(ac5) at (4.75,-3.25) [vertex] {};
\path(ac0) edge (ac1)(ac1) edge (ac2)(ac2) edge (ac3)(ac3) edge (ac0)(ac0) edge (ac4)(ac4) edge (ac5)(ac5) edge (ac0);
\node at (4,-4.2){$F_{6} = \text{fish}$};\node(ad0) at (8,-2.5) [vertex] {};
\node(ad1) at (7.5,-3) [vertex] {};
\node(ad2) at (8.5,-3) [vertex] {};
\node(ad3) at (7.5,-3.5) [vertex] {};
\node(ad4) at (8.5,-3.5) [vertex] {};
\path(ad1) edge (ad2)(ad3) edge (ad4);
\node at (8,-4.2){$M_{2,1}$};\node(ae0) at (12.3,-2.480385) [vertex] {};
\node(ae1) at (12.6,-3) [vertex] {};
\node(ae2) at (12.3,-3.519615) [vertex] {};
\node(ae3) at (11.7,-3.519615) [vertex] {};
\node(ae4) at (11.4,-3) [vertex] {};
\node(ae5) at (11.7,-2.480385) [vertex] {};
\path(ae0) edge (ae1)(ae1) edge (ae2)(ae2) edge (ae3)(ae3) edge (ae4)(ae4) edge (ae5);
\node at (12,-4.2){$P_5$};\node(af0) at (0.3,-5.480385) [vertex] {};
\node(af1) at (0.6,-6) [vertex] {};
\node(af2) at (0.3,-6.519615) [vertex] {};
\node(af3) at (-0.3,-6.519615) [vertex] {};
\node(af4) at (-0.6,-6) [vertex] {};
\node(af5) at (-0.3,-5.480385) [vertex] {};
\path(af0) edge (af1)(af1) edge (af2)(af2) edge (af3)(af3) edge (af4)(af4) edge (af5)(af5) edge (af0);
\node at (0,-7.2){$C_6$};\node(ag0) at (3.5,-5.5) [vertex] {};
\node(ag1) at (4.5,-5.5) [vertex] {};
\node(ag2) at (4.5,-6) [vertex] {};
\node(ag3) at (3.5,-6) [vertex] {};
\node(ag4) at (3.5,-6.5) [vertex] {};
\node(ag5) at (4.5,-6.5) [vertex] {};
\path(ag0) edge (ag1)(ag1) edge (ag2)(ag2) edge (ag3)(ag3) edge (ag4)(ag4) edge (ag5)(ag5) edge (ag2)(ag0) edge (ag3);
\node at (4,-7.2){$\text{domino}$};\node(ah0) at (8,-5.5) [vertex] {};
\node(ah1) at (7,-6.5) [vertex] {};
\node(ah2) at (7.5,-6.5) [vertex] {};
\node(ah3) at (8,-6.5) [vertex] {};
\node(ah4) at (8.5,-6.5) [vertex] {};
\node(ah5) at (9,-6.5) [vertex] {};
\path(ah0) edge (ah1)(ah0) edge (ah2)(ah0) edge (ah3)(ah0) edge (ah5)(ah1) edge (ah2)(ah2) edge (ah3)(ah3) edge (ah4)(ah4) edge (ah5);
\node at (8,-7.2){$\text{tent}_1$};\node(ai0) at (12,-5.5) [vertex] {};
\node(ai1) at (11,-6.5) [vertex] {};
\node(ai2) at (11.5,-6.5) [vertex] {};
\node(ai3) at (12,-6.5) [vertex] {};
\node(ai4) at (12.5,-6.5) [vertex] {};
\node(ai5) at (13,-6.5) [vertex] {};
\path(ai0) edge (ai1)(ai0) edge (ai2)(ai0) edge (ai4)(ai0) edge (ai5)(ai1) edge (ai2)(ai2) edge (ai3)(ai3) edge (ai4)(ai4) edge (ai5);
\node at (12,-7.2){$\text{tent}_2$};
\end{tikzpicture}
\caption{The ``circus'': these 12 graphs, and their complements, are the
  minimal forbidden induced subgraphs for almost-split graphs.}
\label{fig:asplit}
\end{figure}
\begin{proof}
The ``only if'' part is clear, as it is easy to check that none of the
graphs in the circus are almost-split.  For the ``if'' part, suppose
that $G$ does not contain any graphs in the circus. By \ref{split}, we
may assume that $G$ contains $C_4$ or $\co{C_4}$ since split graphs are
almost-split. Furthermore, since the statement is self-complementary, we
may assume that $G$ contains $C_4$. Let $a,b,c,d \in V(G)$ be such that
$G|abcd \iso C_4$ and $a$ is adjacent to $b$ and $d$.  Let
$S=\{a,b,c,d\}$.

Since $W_4\iso \co{M_{2,1}}$ is in the circus, it follows that for all
$v \in V(G)$, $v$ is not complete to $S$.  For $0 \leq i \leq 3$, let
$A_i \subseteq V(G)\setminus S$ denote the set of vertices that have $i$
neighbors in $S$.  Our goal is to show that there exist adjacent $x,y
\in S$ such that:
\begin{itemize}
\item $A_0 \cup A_1 \cup A_2 \cup \{x,y\}$ contains only one edge
  (namely $xy$), and
\item $A_3\cup (S \setminus \{x,y\})$ is a clique, and
\item every vertex of $A_3\cup (S \setminus \{x,y\})$ is adjacent to
  exactly one of $x$ and $y$.
\end{itemize}

\step{(1) If $A_2 \neq \emptyset$, then there exist $x,y \in S$ such
  that $A_2$ is complete to $\{x,y\}$.  Moreover, $A_2$ is a stable
  set.}

Let $A_{ab}\subseteq A_2$ be those vertices that are adjacent to $a$ and
$b$, and define $A_{ac}$, $A_{ad}$, $A_{bc}$, $A_{bd}$, $A_{cd}$
similarly.  First suppose that $u \in A_{ac} \cup A_{bd}$; then $G|abcdu
\iso K_{2,3}$.  Hence, both $A_{ac}$ and $A_{bd}$ are empty.  Next
suppose there exists $u \in A_{ab}$ and $v \in A_{bc}$.  Then either
$G|abcduv \iso \tent_2$ or $G|acduv\iso C_5$, depending on the adjacency
between $u$ and $v$.  Therefore, at least one of $A_{ab}$ and $A_{bc}$
is empty, and from symmetry the same is true for the pairs
$\{A_{bc},A_{cd}\}$, $\{A_{cd},A_{ad}\}$, and $\{A_{ab},A_{ad}\}$.  We
claim that at least one of $A_{ab}$ and $A_{cd}$ is empty.  For suppose
$u \in A_{ab}$ and $v \in A_{cd}$.  Then $G|abcduv\iso \co{C_6}$ or
$G|abcduv\iso \co{\domino}$, depending on the adjacency between $u$ and
$v$.  Similarly, at least one of $A_{bc}$ and $A_{ad}$ is empty.  We
conclude that at most one of $A_{ab},A_{ac},A_{ad},A_{bc},A_{bd}$, and
$A_{cd}$ is non-empty.  Finally suppose that $u,v \in A_2$ are adjacent.
Then $G|abcduv\iso \co{\watch}$.  Hence, $A_2$ is a stable set.  This
proves (1).

\step{(2) There exist adjacent $x,y \in S$ such that $N_S(A_1) \subseteq
  \{x,y\}$.  Moreover, if $A_2 \neq \emptyset$, then $N_S(A_1) \subseteq
  N_S(A_2)$.}

Let $A_a \subseteq A_1$ be those vertices that are adjacent to $a$, and
define $A_b$, $A_c$ and $A_d$ similarly.  We show that at least one of
$A_a$ and $A_c$ is empty.  For suppose that $u \in A_a$ and $v \in A_c$.
Then either $G|abcduv\iso \watch$ or $G|abcuv\iso C_5$, depending on the
adjacency between $u$ and $v$.  Similarly, at least one of $A_b$ and
$A_d$ is empty.  This proves the first part of (2).

Next, let $u\in A_1$ and $v\in A_2$.  Suppose that $N_S(A_1)
\not\subseteq N_S(A_2)$.  From symmetry, we may assume that $u \in A_a$
and $v \in A_{bc}$.  But then either $G|abcduv\iso\co{\tent_1}$ or
$G|acduv\iso C_5$, depending on the adjacency between $u$ and $v$.  This
proves (2).

\step{(3) $A_0 \cup A_1 \cup A_2$ is a stable set.}

First, let $u,v \in A_0$ and suppose that they are adjacent.  Then
$G|abcduv\iso \tv$.  Hence, $A_0$ is a stable set.  Next,
suppose $u,v \in A_1$ and suppose that they are adjacent.  If $u,v$ have
a common neighbor in $S$ then $G|abcduv\iso \fish$.  If
$u,v$ have different neighbors in $S$, then by (2) their neighbors are
adjacent and so $G|abcduv\iso \domino$.  This proves that
$A_1$ is a stable set.  Recall that $A_2$ is a stable set by (1).

Now we show that $A_0$, $A_1$, and $A_2$ are pairwise anticomplete to
each other.  Let $u \in A_0$, $v\in A_1$ and suppose that $u$ and $v$
are adjacent. Then $G|abcduv\iso \flag$.  Next, let $u
\in A_0$ and $v \in A_2$ and again suppose that $u$ and $v$ are
adjacent.  Then $G|abcduv\iso \co{\tent_2}$.  Finally,
let $u \in A_1$ and $v \in A_2$ and suppose that they are adjacent.
Then $G|abcduv\iso \tent_1$.  Therefore, we have shown
that $A_0 \cup A_1 \cup A_2$ is stable.  This proves (3).

\step{(4) There exist adjacent $x,y \in S$ such that $A_3$ is complete
  to $x,y$.  Moreover, for all $u \in A_1 \cup A_2$ and $v \in A_3$,
  $N_S(u) \subseteq N_S(v)$.}

Let $A_{abc}\subseteq A_3$ be the set of vertices that are adjacent to
$a,b$ and $c$, and define $A_{abd}$, $A_{acd}$ and $A_{bcd}$ similarly.
We claim that at least one of $A_{abc}$ and $A_{acd}$ is empty.  For
suppose that $u \in A_{abc}$ and $v \in A_{acd}$.  Then either $G|(S
\cup \{u,v\})\iso \co{\tv}$ or $G|acduv\iso W_4$, depending on the
adjacency between $u$ and $v$.  This proves the claim.  By a similar
argument, at least one of $A_{abd}$ and $A_{bcd}$ is empty.  Therefore,
there exist (at least) 2 adjacent vertices of $S$ that are complete to
$A_3$.

Next, let $u \in A_1 \cup A_2$ and $v \in A_3$ and suppose that $N_S(u)
\not\subseteq N_S(v)$.  From symmetry, we may assume that $v \in
A_{abc}$.  If $u \in A_1$, then $u \in A_d$ and so either
$G|abcduv\iso\co{\fish}$ or $G|acduv\iso K_{2,3}$.  So we may assume
that $u \in A_2$.  Again from symmetry, we may assume that $u \in
A_{cd}$.  But then either $G|abcduv\iso\co{\flag}$ or
$\co{P_5}$, depending on the adjacency between $u$ and $v$.  This proves
(4).

\step{(5) $A_3$ is a clique.}

Let $u,v \in A_3$ and suppose that they are not adjacent.  By (4), there
exist adjacent $x,y \in S$ such that $A_3$ is complete to $\{x,y\}$, and
from symmetry we may assume $\{x,y\}=\{a,b\}$.  First suppose that $u,v
\in A_{abc}$.  Then $G|acduv\iso K_{2,3}$.  Therefore, $A_{abc}$ is a
clique, and similarly so is $A_{abd}$.  Next suppose that $u \in
A_{abc}$ and $v \in A_{abd}$. Then $G|abcduv\iso\co{P_5}$.  Hence, $A_3$
is a clique, and this proves (5).
\betweenspace
From (1), (2), and (4), it follows that there exist adjacent $x,y \in S$
such that $A_3 \cup A_2$ is complete to $\{x,y\}$ and $N_S(A_1)
\subseteq \{x,y\}$.  From symmetry, we may assume that
$\{x,y\}=\{a,b\}$.  Hence, $A_0 \cup A_1 \cup A_2$ is anticomplete to
$\{c,d\}$.  Therefore, by (3), $A_0 \cup A_1 \cup A_2 \cup \{c,d\}$
contains exactly one edge (namely $cd$).  By (4) and (5), $A_3 \cup
\{a,b\}$ is a clique.  Also, since every member of $A_3$ is adjacent to
exactly 3 members of $S$, it follows that for all $u \in A_3 \cup
\{a,b\}$, $u$ is adjacent to exactly one of $c,d$.  Hence, we have shown
that $G$ is almost-split and this proves \ref{asplit}.
\end{proof}

\section{Excluding 6 graphs}

In the previous section, we have seen the 12 minimal forbidden induced
subgraphs (up to taking complements) for almost-split graphs.  Six of
them are doubled and the other six are not.  In this section, we prove
that if a graph contains one of these six doubled graphs but no graphs in
$\mathcal{F}$, then it is doubled.

\begin{theorem}\label{M_{2,1}}
A graph containing $M_{2,1}$ but no graphs in $\mathcal{F}$ is doubled.
\end{theorem}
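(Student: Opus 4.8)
The plan is to adapt the method used for \ref{asplit}, taking an induced $M_{2,1}$ as the seed in place of the $C_4$ used there. Fix $a_1,a_2,b_1,b_2,c$ with $a_1a_2,b_1b_2\in E(G)$ and $c$ anticomplete to the other four, so that $G|\{a_1,a_2,b_1,b_2,c\}\iso M_{2,1}$, and write $S=\{a_1,a_2,b_1,b_2,c\}$. I will construct a partition $V(G)=A\cup B$ witnessing that $G$ is doubled, in which $a_1a_2$ and $b_1b_2$ are matched edges of $A$ and $c$ is an unmatched vertex of $A$. Placing $c$ on the matched side is forced by the alignment conditions: a vertex of $B$ must meet each edge of $A$ in exactly one end, whereas $c$ is anticomplete to both seed edges, so $c$ cannot join $B$ once a seed edge sits in $A$. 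This is exactly the feature that pushes $M_{2,1}$, unlike $\co{C_4}=M_{2,0}$, outside the almost-split class. Throughout I will use that $\mathcal{F}$ is closed under complementation (it consists of the $23$ displayed graphs together with their complements), so $\co{G}$ also contains no member of $\mathcal{F}$ and I may complement whenever it collapses symmetric cases.

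First I would classify each $v\in V(G)\setminus S$ by its adjacency to $\{a_1,a_2\}$ (none, exactly one, or both), its adjacency to $\{b_1,b_2\}$, and its adjacency to $c$. The aim is to reduce, using the small members of $\mathcal{F}$, to two attachment types: those anticomplete to $\{a_1,a_2,b_1,b_2\}$, which I place in $A$, and those adjacent to exactly one end of $a_1a_2$ and exactly one end of $b_1b_2$, which I place in $B$. Several patterns are eliminated cleanly: a vertex complete to one seed edge and anticomplete to the other yields a triangle disjoint from an edge, i.e. an induced $\co{K_{2,3}}=\co{F_2}$; a vertex lying on a $P_2$ with one seed edge while anticomplete to the other edge and to $c$ yields $F_7=P_0\cup P_1\cup P_2$; and a vertex adjacent to $c$ and to exactly one end of one seed edge, but anticomplete to the other edge, yields $F_8=P_1\cup P_3$. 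The remaining mixed patterns (for example a vertex complete to both edges, or one adjacent to $c$ together with a partial attachment) I would either rule out, or fold into one of the two allowed types, by the same kind of $6$-vertex checks against $F_3,\dots,F_{12}$, $C_5$, and their complements.

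With $A$ and $B$ defined, I would finish by verifying the three defining properties of a doubled partition, organized as a sequence of numbered claims in the style of (1)--(5) of \ref{asplit}. First, that $G|A$ is matched: the external vertices placed in $A$ must be anticomplete to $S$ and to one another except for designated matched pairs, and any violation embeds a forbidden graph. Second, that $G|B$ is antimatched, i.e. the vertices of $B$ are pairwise adjacent except for the matched pairs of the underlying antimatching. Third, the two alignment bullets: every $v\in B$ meets each $A$-edge in exactly one end, and every $u\in A$—crucially including $c$, which has no $A$-edge and therefore participates only through the second bullet—meets each nonadjacent pair of $B$ in exactly one vertex.

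The main obstacle is this last, global verification. Unlike the seed-plus-two-vertices arguments of \ref{asplit}, the double-split conditions range over arbitrary edges of $A$ and arbitrary (pairs of) vertices of $B$, so the casework is substantially heavier; in particular, the $K_{2,3}$-type obstructions force the finer structure of $B$ (which pairs of $B$-vertices may be nonadjacent, and how $c$ interacts with them), and each bad configuration must be matched to the correct $6$- or $7$-vertex member of $\mathcal{F}$, frequently only after passing to the complement. I expect the treatment of $c$ and of the second alignment bullet to be where most of the delicate work lies.
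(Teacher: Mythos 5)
Your proposal follows essentially the same route as the paper's proof: the same partition (the seed $M_{2,1}$ plus the vertices anticomplete to the four seed-edge endpoints forming the semi-matched side, and the vertices adjacent to exactly one end of each seed edge forming the semi-antimatched side, with the isolated seed vertex forced onto the matched side), and the same opening eliminations via $\co{K_{2,3}}$, $F_7$, and $F_8$. The casework you defer is exactly what the paper's steps carry out---ruling out vertices with three or four neighbors among the seed-edge endpoints via $F_9$--$F_{12}$, establishing the semi-antimatched structure via $\co{K_{2,3}}$, $\watch$, $C_5$, and the $7$-vertex graph $F_{17}$, and verifying alignment via $\co{K_{2,3}}$ and $F_7$---so your outline is a faithful skeleton of the paper's argument.
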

\begin{proof}
Let $G$ be a graph containing $M_{2,1}$ but no graphs in $\mathcal{F}$.
Let $G|abcde\iso M_{2,1}$, where $bc$ and $de$ are the
two edges; let $S = \{a,b,c,d,e\}$.  For $0\leq i \leq 4$, let $A_i
\subseteq V(G) \setminus S$ denote the set of vertices that have $i$
neighbors in $\{b,c,d,e\}$. Our goal is to show the following:
\begin{itemize}
\item $A_1 = A_3 = A_4 = \emptyset$, and
\item $G|(A_0 \cup S)$ is semi-matched, and
\item $G|A_2$ is semi-antimatched, and
\item $A_0 \cup S$ and $A_2$ are aligned.
\end{itemize}
Together, these statements imply that $G$ is doubled.

\step{(1) $A_1 = A_3 = A_4 = \emptyset$. Also, if $v \in A_2$, then $v$
  is adjacent to exactly one of $b$ and $c$, and to exactly one of $d$
  and $e$.}

If $v \in A_1$, then $G|abcdev\iso F_7$ or $F_8$,
depending on the adjacency between $v$ and $a$. Therefore $A_1$ is
empty.  If $v \in A_3$, then $G|abcdev\iso F_9$ or
$F_{10}$, depending on the adjacency between $v$ and $a$. Therefore
$A_3$ is empty.  And if $v \in A_4$, then $G|abcdev\iso F_{11}$ or
$G|abcdev\iso F_{12}$, depending on the adjacency between $v$ and
$a$. Therefore $A_4$ is empty.

Next, let $v \in A_2$. If $v$ is adjacent to $b$ and $c$, then
$G|bcdev\iso \co{K_{2,3}}$. By symmetry, $v$ is not adjacent to
both of $d$ and $e$.  Hence, $v$ is adjacent to exactly one of $b$ and
$c$ and to exactly one of $d$ and $e$.  This proves (1).

\step{(2) $G|(A_0 \cup S)$ is semi-matched.}

First, we claim that at most one vertex $x \in A_0$ is adjacent to $a$,
and if such a vertex $x$ exists, then $x$ is not adjacent to any other
vertices in $A_0$.  For suppose there are two vertices $x,y \in A_0$,
both adjacent to $a$.  If $x$ and $y$ are adjacent, then $G|abcxy\iso
\co{K_{2,3}}$, and if they are not adjacent, $G|abcdxy\iso F_7$.  So
there is at most one vertex in $A_0$ adjacent to $a$.  Moreover if there
is a vertex $x \in A_0$ adjacent to $a$, $x$ is not adjacent to any
other vertex $y \in A_0$ since otherwise $G|abcdxy\iso F_7$. This proves
the claim.

To prove (2), it is enough to show that there do not exist vertices
$u,v,w \in A_0 \cup \{a,b,c,d,e\}$ such that $G|uvw\iso C_3$ or
$G|uvw\iso P_2$.  If at least one of $u,v,w$ is a member of $S$, then
$G|uvw$ cannot be isomorphic to $C_3$ nor $P_2$ by the claim.  So we may
assume $u,v,w \in A_0$. But now if $G|uvw\iso C_3$, then
$G|bcuvw\iso \co{K_{2,3}}$ and if $G|uvw\iso P_2$, then
$G|bcduvw\iso F_7$.  This proves (2).

\step{(3) Let $u,v \in A_2$ be non-adjacent. Then $N_{\{b,c,d,e\}} (u)$
  is disjoint from $N_{\{b,c,d,e\}} (v)$. Moreover, exactly one of $u$
  and $v$ is adjacent to $a$.}

From (1) and by symmetry, we may assume that $u$ is adjacent to $b$ and
$d$.  Suppose that $v$ is also adjacent to $b$ and $d$. Then
$G|bcdeuv\iso\watch$.  Next, suppose that $v$ is adjacent to $b$ and $e$
(or $c$ and $d$).  Then $G|bdeuv$ (or $G|bcduv$) is isomorphic to $C_5$.
Consequently, $v$ is adjacent to $c$ and $e$.

Moreover, if $u$ and $v$ are both adjacent to $a$, then $G|abcuv\iso
C_5$ and if $u$ and $v$ are both non-adjacent to $a$, then
$G|abcdeuv\iso F_{17}$.  Hence, exactly one of $u$ and $v$ is adjacent
to $a$.  This proves (3).

\step{(4) $G|A_2$ is semi-antimatched.}

It follows easily from (3) that there is no stable set of size 3 in
$A_2$.  Therefore, it is enough to show that there do not exist vertices
$u,v,w \in A_2$ such that $G|uvw$ contains exactly one edge (say
$uv$). For contradiction, suppose that such $u,v,w$ exist.  From (3) and
by symmetry, we may assume that $\{u,v\}$ is complete to $\{b,d\}$, $w$
is complete to $\{c,e\}$, and $N_{\{u,v,w\}} (a)$ is either $\{u,v\}$ or
$\{w\}$.  In the first case, $G|acuvw\iso \co{K_{2,3}}$
and in the second case, $G|abuvw\iso \co{K}_{2,3}$.  Therefore,
there do not exist $u,v,w \in A_2$ such that $G|uvw$ contains exactly
one edge, and this proves (4).
\betweenspace
It remains to show that $G|(A_0 \cup S)$ and $G|A_2$ are aligned.  In
(3), we have shown that for all non-adjacent $u,v \in A_2$ and all $w
\in A_0 \cup S$, $w$ is adjacent to exactly one of $u$ and $v$.  Hence,
it suffices to show that for all $u \in A_2$ and all adjacent $v,w \in
A_0 \cup S$, $u$ is adjacent to exactly one of $v,w$.  So suppose that
for some $u,v,w$ as above, $u$ is adjacent to both of $v,w$.  Let $x,y
\in A_0 \cup S$ be adjacent such that $\{x,y\}$ is disjoint from
$\{v,w\}$ (such $x,y$ exist since $A_0\cup S$ contain at least two
edges).  Then $G|uvwxy\iso \co{K}_{2,3}$.  Next, suppose
that for some $u,v,w$ as above, $u$ is non-adjacent to both of $v,w$.
Note that by (1), $\{v,w\}$ is disjoint from $\{b,c,d,e\}$.  By (1) and
without loss of generality, we may assume that $u$ is adjacent to $b$
and $d$.  But then $G|bceuvw\iso F_7$.  Therefore $G$ is
doubled and this proves \ref{M_{2,1}}. \end{proof}

\begin{theorem}\label{P_5}
A graph containing $P_5$ but no graphs in $\mathcal{F}$ is doubled.
\end{theorem}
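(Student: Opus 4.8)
The plan is to mirror the structure of the proof of Theorem \ref{M_{2,1}}: fix a copy of $P_5$ in $G$, partition the remaining vertices according to their attachment to this copy, rule out the "bad" attachment types by exhibiting a member of $\mathcal{F}$, and then read off a double-split partition. Specifically, I would let $G|abcde \iso P_5$ with edges $ab$, $bc$, $cd$, $de$ (so $a,e$ are the endpoints and $c$ is the center), set $S = \{a,b,c,d,e\}$, and for each $v \in V(G)\setminus S$ classify $v$ by its neighborhood $N_S(v) \subseteq S$. Since a $P_5$ already uses up both the matched and antimatched structure in a constrained way — the obvious double-split reading of $P_5$ is to place one edge on the matched side, its other endpoint together with a nonneighbor on the antimatched side, etc. — the first task is to determine which specific partition of $S$ I am aiming for. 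I expect the target to be: $\{b,c\}$ (say) as a matched edge sitting in the semi-matched part $A$, and the remaining vertices $a,d,e$ distributed into $A$ and the semi-antimatched part $B$ so that the alignment conditions are forced. Establishing the right target partition is the conceptual heart, and I would settle it early by checking how each of $a,d,e$ must behave relative to the edge $bc$ under the aligned conditions.

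The key technical steps, carried out in the enumerated-claim style of the previous proof, would be as follows. First, I would rule out all but a few attachment types $N_S(v)$ by forbidden-subgraph arguments: for each candidate $N_S(v)$ together with the two possible adjacencies of $v$ to the "free" vertex (analogous to the role of $a$ in Theorem \ref{M_{2,1}}), I would identify a graph on six vertices that is isomorphic to some $F_i$ or $\co{F_i}$. The graphs $F_1$ through $F_{12}$ (and their complements), which are precisely the six-vertex members of $\mathcal{F}$ arising from one matched/antimatched pair, are the natural candidates here, together with $C_5 = F_1$ as the ubiquitous fallback. Second, having pruned the attachment types, I would show that the surviving vertices split cleanly into those joining the semi-matched side and those joining the semi-antimatched side; this amounts to proving two stability/clique statements (one set induces a semi-matched graph, the complementary set induces a semi-antimatched graph) exactly as steps (2) and (4) do in Theorem \ref{M_{2,1}}. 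Third, I would verify the two alignment bullets: for adjacent $u,v$ on the matched side and any $w$ on the antimatched side, $w$ sees exactly one of $u,v$; and dually for nonadjacent pairs on the antimatched side. Each failure of alignment produces a five- or six-vertex induced subgraph that I would match to a member of $\mathcal{F}$, with $\co{K_{2,3}} = \co{F_2}$, $\watch = F_3$, and $F_7 = P_0 \cup P_1 \cup P_2$ being the expected workhorses, just as in the preceding proof.

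The main obstacle I anticipate is the larger number of attachment types to analyze: a copy of $P_5$ has less symmetry than $M_{2,1}$ (the automorphism group of $P_5$ has order only $2$, versus the richer symmetry of two disjoint edges plus an isolated vertex), so fewer cases collapse under symmetry and the bookkeeping over subsets $N_S(v)$ is heavier. In particular, the vertices adjacent to the center $c$ behave differently from those adjacent only to an endpoint, and I expect the delicate step to be showing that a vertex cannot attach to $S$ in a way that is "compatible with neither side" — i.e., that forces a configuration not covered by any single $F_i$ on six vertices but only by a seven-vertex graph such as one of $F_{13},\dots,F_{20}$. If such a case arises I would handle it by adding one already-classified vertex from outside $S$ to exhibit a seven-vertex forbidden subgraph, a technique that the alignment step of Theorem \ref{M_{2,1}} already uses when it invokes vertices $x,y \in A_0 \cup S$. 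Once all attachment types are pruned and the two sides identified, assembling the double-split partition and checking alignment should be routine, following the closing paragraph of the previous proof verbatim in spirit.
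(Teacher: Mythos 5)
Your proposal is built around the wrong graph, and this is fatal rather than cosmetic. The paper defines $P_i$ to be the path with $i$ \emph{edges}, so the $P_5$ in this theorem is the six-vertex path $a\hbox{-}b\hbox{-}c\hbox{-}d\hbox{-}e\hbox{-}f$; your proof plan fixes the five-vertex path (four edges), which in the paper's notation is $P_4$. The distinction is exactly what makes the theorem provable by the local analysis you describe. The six-vertex path is one of the circus graphs: it is doubled but \emph{not} almost-split, and consequently its double-split structure is rigid (the paper's target partition is forced: $cd$ is the unique matched edge, $\{b,e\}$ the antimatched pair, and $a,f$ are isolated vertices on the semi-matched side, with every outside vertex landing in $A_1$ or $A_3$). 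The five-vertex path, by contrast, is almost-split and embeds in double-split graphs in genuinely inequivalent ways --- for instance with matched side $\{a,b,d,e\}$ (edges $ab$, $de$) and antimatched side $\{c\}$, or with matched side $\{b,c,e\}$ (edge $bc$) and antimatched side $\{a,d\}$. Because of this ambiguity, your central step --- ``rule out the bad attachment types by exhibiting a member of $\mathcal{F}$'' --- cannot work: there are doubled graphs containing the five-vertex path whose extra vertices attach in ways incompatible with whichever single target partition you fix, and such graphs contain no member of $\mathcal{F}$ at all, so no forbidden-subgraph argument can prune them. This is precisely the $C_4$ obstacle the introduction describes, and it is the reason the paper routes the whole argument through almost-split graphs and the six rigid circus graphs instead of through small split-type subgraphs.

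A second, independent gap: the paper's proof of this theorem does not rely only on exhibiting members of $\mathcal{F}$. Its first move is to invoke the preceding theorem (the $M_{2,1}$ case) so that one may assume $G$ contains neither $M_{2,1}$ nor $\co{M_{2,1}}$, and many of the subsequent exclusions (e.g., showing $A_0=\emptyset$ via $G|abdev\iso M_{2,1}$, or pruning attachments in $A_1$) terminate in a copy of $M_{2,1}$, which is \emph{not} in $\mathcal{F}$ because it is doubled. Your proposal never mentions this reduction, so even with the correct six-vertex $P_5$ your pruning toolkit would be too weak: several surviving attachment types produce no member of $\mathcal{F}$ on any number of vertices and can only be eliminated using the inherited $M_{2,1}$- and $\co{M_{2,1}}$-freeness. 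To repair the proposal you would need to (i) restate the analysis for the six-vertex path with $S=\{a,b,c,d,e,f\}$, classifying vertices by their neighbors in $\{b,c,d,e\}$ and aiming at the partition $A_1\cup\{a,c,d,f\}$ (semi-matched) versus $A_3\cup\{b,e\}$ (semi-antimatched), and (ii) explicitly carry the exclusion of $M_{2,1}$ and its complement from the previous theorem as a standing hypothesis.
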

\begin{proof}
Let $G$ be a graph containing $P_5$ but no graphs in $\mathcal{F}$.  Let
$G|abcdef\iso P_5$ where $ab$, $bc$, $cd$, $de$, and $ef$ are the five
edges.  By \ref{M_{2,1}}, we may assume that $G$ or $\co{G}$ does not
contain $M_{2,1}$.  Let $S = \{a,b,c,d,e,f\}$.  For $0\leq i \leq 4$,
let $A_i \subseteq V(G) \setminus S$ denote the set of vertices that
have $i$ neighbors in $\{b,c,d,e\}$. Our goal is to show the following:
\begin{itemize}
\item $A_0 = A_2 = A_4 = \emptyset$, and
\item $G|(A_1 \cup \{a,c,d,f\})$ is semi-matched, and
\item $G|(A_3 \cup \{b,e\})$ is semi-antimatched, and
\item $G|(A_1 \cup \{a,c,d,f\})$ and $G|(A_3 \cup \{b,e\})$ are aligned.
\end{itemize}
Together, these statements imply that $G$ is doubled.

\step{(1) $A_0 = A_2 = A_4 = \emptyset$.}

First suppose $v \in A_0$. If $v$ is non-adjacent to $a$, then $G|abdev\iso
M_{2,1}$.  So we may assume that $v$ is adjacent to $a$ and similarly
$v$ is adjacent to $f$; but then $G|abcdefv\iso F_{13}$.
Therefore $A_0$ is empty.

Next, suppoose $v \in A_4$.  If $v$ is not adjacent to both of $a$ and
$f$, then $G|abcdefv\iso F_{14}$.  If $v$ is adjacent to exactly one of
$a$ and $f$, then $G|abcdefv\iso F_{15}$.  So we may assume that $v$ is
adjacent to both $a$ and $f$; but then $G|abcdefv\iso F_{16}$.
Therefore $A_4$ is empty.

Finally suppose $v \in A_2$.  Let $A_{bc}\subseteq A_2$ be those
vertices that are adjacent to $b$ and $c$, and define $A_{bd}$,
$A_{be}$, $A_{cd}$, $A_{ce}$, $A_{de}$ similarly.  Suppose $v \in
A_{bc}$.  If $v$ is adjacent to $f$, then $G|cdefv\iso C_5$; otherwise,
$G|bcefv\iso \co{K_{2,3}}$.  Therefore $A_{bc}$ is empty and by
symmetry, $A_{de}$ is empty.  Next, suppose $v \in A_{bd}$.  If $v$ is
not adjacent to $a$, then $G|abcdev\iso \watch$ and if $v$ is not
adjacent to $f$, then $G|bcdefv\iso \flag$.  So we may assume that $v$
is adjacent to both of $a$ and $f$; but then $G|abdefv\iso \fish$.
Therefore $A_{bd}$ is empty and by symmetry, $A_{ce}$ is empty.  Next,
suppose $v \in A_{be}$. Then $G|bcdev\iso C_5$.  Therefore $A_{be}$ is
empty.  So we may assume that $v \in A_{cd}$.  If $v$ is adjacent to
both of $a$ and $f$, then $G|abcefv\iso \flag$.  If $v$ is adjacent to
$a$ and not to $f$, then $G|abcefv\iso \tv$, and if $v$ is adjacent to
$f$ and not to $a$, then $G|abdefv\iso\tv$.  So we may assume that $v$
is not adjacent to either $a$ or $f$; but then $G|abefv\iso M_{2,1}$.
Therefore $A_2 = \emptyset$ and this proves (1).

\step{(2) If $v \in A_1$, then $v$ is adjacent to either $b$ or $e$ and
  is not adjacent to both $a$ and $f$. Moreover, $A_1$ is a stable set.}

Let $A_{b}\subseteq A_1$ be those vertices that are adjacent to $b$, and
define $A_{c}$, $A_{d}$, $A_{e}$ similarly.  Suppose $v \in A_c$.  If
$v$ is adjacent to $a$, then $G|abcdev\iso \flag$; otherwise,
$G|abdev\iso M_{2,1}$.  Therefore $A_c$ is empty, and by symmetry, $A_d$
is empty.  Suppose $v \in A_b$.  If $v$ is adjacent to $a$, then
$G|abdev\iso \co{K_{2,3}}$ and if $v$ is adjacent to $f$, then
$G|acdfv\iso M_{2,1}$. Therefore $v$ is anticomplete to $\{a,f\}$, and
similarly, every vertex in $A_e$ is anticomplete to $\{a,f\}$.

Next, suppose that $u,v \in A_b$ are adjacent.  Then
$G|bdeuv\iso\co{K_{2,3}}$.  Therefore $A_b$ is a stable set and
similarly, so is $A_e$.  Finally, suppose that $u \in A_b$ and $v \in
A_e$ are adjacent.  Then $G|acduv\iso M_{2,1}$.  Therefore $A_b \cup A_e
= A_1$ is a stable set, and this proves (2).

\step{(3) If $v \in A_3$, then $N_{\{b,c,d,e\}}(v)$ is either
  $\{b,c,e\}$ or $\{b,d,e\}$.  Moreover, if $u,v \in A_3$ are not adjacent,
  then $N_{\{u,v\}} (c) \neq N_{\{u,v\}} (d)$ and $\lvert N_{\{u,v\}}
  (a)\rvert = \lvert N_{\{u,v\}} (f)\rvert = 1$.}

Let $A_{bcd}\subseteq A_3$ be those vertices that are adjacent to $b,c$,
and $d$, and define $A_{bce}$, $A_{bde}$, $A_{cde}$ similarly.  Suppose
$v \in A_{cde}$.  If $v$ is not adjacent to $a$, then
$G|abdev\iso\co{K_{2,3}}$, and if $v$ is adjacent to $a$ and $f$, then
$G|abcefv\iso\fish$.  So we may assume that $v$ is adjacent to $a$ but
not to $f$; but then $G|abcefv\iso \flag$.  Therefore $A_{cde}$ is empty
and similarly, so is $A_{bcd}$.

Now suppose $u,v \in A_{bce}$ are not adjacent; then
$G|cdeuv\iso K_{2,3}$.  Therefore $A_{bce}$ is a clique and similarly,
so is $A_{bde}$.  Suppose $u \in A_{bce}$ and $v \in A_{bde}$ are not
adjacent.  If $u,v$ are both adjacent to $a$, then
$G|abdeuv\iso\co{\flag}$, and if $u,v$ are both non-adjacent to $a$,
then $G|abcdeuv\iso \co{F_{18}}$.  Therefore exactly one of $u$ and $v$
is adjacent to $a$ and similarly, exactly one of $u$ and $v$ is adjacent
to $f$. This proves (3).

\step{(4) $G|(A_3 \cup \{b,e\})$ is semi-antimatched.}

It is enough to show that no set of three vertices $\{u,v,w\} \subseteq
A_3 \cup \{b,e\}$ contains fewer than two edges.  By (3), it is obvious that
there are no stable sets of size 3 in $G|(A_3 \cup \{b,e\})$.  Suppose
$\{u,v,w\}$ contains exactly one edge $uv$.  From (3) and
symmetry, we may assume $u,v \in A_{bce}$ and $w \in A_{bde}$.  But then
$G|cdeuvw\iso \co{\flag}$.  This proves (4).
\betweenspace
From (2) and (4), we have a candidate of a partition for $G$ to be
doubled.  The subgraph $G| (A_1 \cup \{a,c,d,f\})$ contains only one
edge (namely $cd$) and $G|(A_3 \cup \{b,e\})$ is semi-antimatched.
Every $v \in A_3 \cup \{b,e\}$ has exactly one neighbor in $\{c,d\}$ and
from (3), for every non adjacent pair $u,v \in A_3 \cup \{b,e\}$,
$N_{\{u,v\}} (c) \neq N_{\{u,v\}} (d)$.  Also by (3), if $u,v \in A_3
\cup \{b,e\}$ are nonadjacent, $\lvert N_{\{u,v\}} (a)\rvert = \lvert
N_{\{u,v\}} (f)\rvert = 1$.  Moreover, for $w \in A_1$, either $G|abcdew
\iso P_5$ or $G|bcdefw\iso P_5$, and so $\lvert N_{\{u,v\}} (w)\rvert =
1$ for every $w \in A_1$, by an analogous argument to the one above.
Therefore, $G|(A_3 \cup \{b,e\})$ and $G| (A_1 \cup \{a,c,d,f\})$ are
aligned and so $G$ is doubled; this proves \ref{P_5}.
\end{proof}

\begin{theorem}\label{C_6}
A graph containing $\co{C_6}$ but no graphs in $\mathcal{F}$ is doubled.
\end{theorem}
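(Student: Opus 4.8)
The plan is to follow exactly the template established in the proofs of \ref{M_{2,1}} and \ref{P_5}: take a fixed copy of $\co{C_6}$ as the ``seed'' $S$, classify every other vertex of $G$ by how it attaches to a well-chosen subset of $S$, and show that all but two of the attachment classes are empty and that the survivors partition into a semi-matched side and a semi-antimatched side that are aligned. Concretely, I would first exploit the self-complementarity of the problem: since $\co{\co{C_6}}=C_6$, and $C_6$ appears in the circus (Figure~\ref{fig:asplit}), a graph containing $\co{C_6}$ but no graph of $\mathcal F$ is handled, after possibly complementing, by reasoning about the $C_6$ directly. So I would fix $S=\{a,b,c,d,e,f\}$ with $G|S\iso C_6$, say with edges $ab,bc,cd,de,ef,fa$ forming the hexagon. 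As in \ref{P_5}, I may also assume by \ref{M_{2,1}} that neither $G$ nor $\co G$ contains $M_{2,1}$, which will be the main tool for killing off edge--nonedge configurations outside $S$.

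Next I would introduce the counting classes. The natural choice is to count neighbors in a symmetric ``core'' of the hexagon; imitating \ref{P_5} where the endpoints $a,f$ of $P_5$ played a distinguished role, here I would single out an antipodal pair or an independent triple of the $C_6$ and let $A_i$ be the set of vertices of $V(G)\setminus S$ with exactly $i$ neighbors in that core. The first and longest step (1) would then systematically show that the ``wrong-parity'' classes are empty: for each candidate vertex $v$ in a forbidden class, I would locate a five- or six-vertex induced subgraph of $G|(S\cup\{v\})$ isomorphic to one of the graphs in $\mathcal F$ (the most useful culprits being $C_5$, $K_{2,3}$, $\watch$, $\flag$, the various $F_{13}$--$F_{20}$ built on a hexagonal base, and $M_{2,1}$ or its complement), splitting into subcases by the adjacency of $v$ to the two core vertices not counted. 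The bookkeeping is identical in spirit to step (1) of \ref{P_5}, just with a $C_6$ base rather than a $P_5$ base.

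With the surviving classes identified, steps (2)--(4) would establish that one survivor class together with a stable subset of $S$ is semi-matched (no induced $P_2$ or $C_3$, proven by exhibiting $M_{2,1}$ or $\co{K_{2,3}}$ whenever such a configuration appears), that the other survivor class together with the complementary subset of $S$ is semi-antimatched (no three vertices with fewer than two edges, proven symmetrically by $\watch$, $C_5$, and $\co{K_{2,3}}$ as in step (3)--(4) of \ref{M_{2,1}} and \ref{P_5}), and finally that the two sides are aligned, i.e.\ each vertex on one side splits every edge and every nonedge on the other side exactly in half. The alignment check is the formal analogue of the concluding paragraphs of the two previous proofs and should again reduce to forbidding $\co{K_{2,3}}$ (for an over-adjacent vertex) and $M_{2,1}$ or $F_7$ (for an under-adjacent vertex).

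The main obstacle I anticipate is not logical but combinatorial: $C_6$ has a larger automorphism group and more internal symmetry than $P_5$, so the case analysis in step (1) has many more subcases, and one must be careful to use the symmetry of the hexagon to collapse them rather than enumerate all of them. In particular, choosing the counting core so that its stabilizer in $\mathrm{Aut}(C_6)$ is as large as possible (and so that ``exactly two neighbors in the core'' cleanly distinguishes matched-side from antimatched-side vertices) is the key design decision; a poor choice forces the two target classes to interleave and destroys the clean semi-matched/semi-antimatched split. I would therefore spend the most care on selecting that core and verifying that the empty classes are exactly those whose parity is incompatible with it, after which the remaining steps go through by the same forbidden-subgraph substitutions used in \ref{M_{2,1}} and \ref{P_5}.
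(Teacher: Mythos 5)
Your overall strategy (a fixed seed, attachment classes, forbidden-subgraph substitutions, and the reduction via \ref{M_{2,1}}) is the right family of ideas, and passing to the complement to work with the hexagon is harmless. But the design decision you yourself flag as critical --- choosing a small counting core --- has no good answer, and this is a genuine gap rather than bookkeeping. After your own reduction (no $M_{2,1}$ or $\co{M_{2,1}}$), the attachment types that actually occur in doubled extensions of the hexagon are exactly $N_S(v)=S\setminus\{x,y\}$ where $xy$ is a diagonal of the hexagon (equivalently, in the paper's picture, every external vertex has exactly two neighbors in $S$, forming one of the nine edges of $\co{C_6}$; this is the content of the paper's steps (1)--(2)). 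Against any antipodal pair these nine types realize counts $0$, $1$, and $2$, and against any independent triple they realize counts $1$, $2$, and $3$; so neither proposed core separates valid from invalid attachments by count, let alone ``matched-side'' from ``antimatched-side'' vertices. The only core that works is all of $S$, and once you take it, the remaining difficulty is not a parity argument at all but deciding which of the nine pair-classes can coexist.

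The deeper gap is that your endgame template --- two survivor classes, one joining a matched part of $S$, the other an antimatched part --- is structurally false for this seed, unlike for \ref{M_{2,1}} and \ref{P_5}. In every doubled extension of $\co{C_6}$ compatible with the reduction, \emph{all} external vertices land on one side, together with a single cross edge of $\co{C_6}$ (an antipodal pair of the hexagon); the other side consists of the remaining four seed vertices alone, and which of the three cross edges plays this role is dictated by which pair-classes are nonempty. Disentangling these three mutually exclusive candidate structures is the bulk of the paper's proof (steps (3)--(6) and (10)). Moreover, you miss the degenerate case entirely: if two external vertices are adjacent (in the $\co{C_6}$ picture), the paper proves that $V(G)=S\cup\{u,v\}$, i.e.\ the graph can contain no further vertex whatsoever (steps (7) and (9), with $F_{23}=L(K_{3,3})$ among the witnesses). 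This ``the graph must end here'' argument has no analogue in the two proofs you extrapolate from, and without it the alignment step you describe cannot be carried out, because an external edge is incompatible with any additional external vertex rather than fitting into a common partition.
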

\begin{proof}
Let $G$ be a graph containing $\co{C_6}$ but no graphs in $\mathcal{F}$.
Let $G|abcdef\iso\co{C_6}$ where $\{a,c,e\}$ and $\{b,d,f\}$ are the two
triangles and the remaining edges are $ad, be$, and $cf$. Let $S =
\{a,b,c,d,e,f\}$.  By \ref{M_{2,1}}, we may assume $G$ or $\co{G}$ does
not contain $M_{2,1}$.  For $0\leq i \leq 6$, let $A_i \subseteq V(G)
\setminus S$ denote the set of vertices that have $i$ neighbors in
$S$. Our goal is to show that $A_i = \emptyset$ unless $i = 2$ and 4
vertices of $S$ induce antimatching side and the rest of vertices (two
in $S$ together with vertices in $A_2$) induce matching side so that $G$
is doubled.

\step{(1) $A_i = \emptyset$ for $i = 0,1,3,4,5,6$.}

If $v \in A_0$, then $G|(S \cup \{v\})\iso\co{F_{19}}$, so $A_0$ is
empty.  Also, if $v \in A_6$, then $G|(S \cup \{v\})\iso \co{F_{17}}$,
and so $A_6$ is empty.

Next, suppose $v \in A_1$.  From symmetry, we may assume
$N_S(v) = \{a\}$.  Then $G|(S \cup \{v\})\iso F_{18}$ and therefore $A_1$
is empty.

Next, suppose $v \in A_3$.  From symmetry, we may assume $N_S(v)$ is one
of $\{a,b,c\}, \{a,b,d\}, \{a,c,e\}$.  If $N_S(v) = \{a,b,c\}$, then
$G|abdev\iso K_{2,3}$ and if $N_S(v) = \{a,b,d\}$, then $G|abcfv\iso
C_5$.  So we may assume that $N_S(v) = \{a,c,e\}$; but then
$G|abcdev\iso \co{\watch}$ and so $A_3$ is empty.

Next, suppose $v \in A_4$.  From symmetry, we may
assume $N_S(v)$ is one of $\{a,b,c,d\}, \{a,b,c,e\}$, and $\{a,b,d,e\}$.
If $N_S(v) = \{a,b,c,d\}$, then $G|bcefv\iso K_{2,3}$ and if $N_S(v) =
\{a,b,c,e\}$, then $G|acdefv\iso \co{\watch}$.  So we may assume that $N_S(v) =
\{a,b,d,e\}$; but then $G|abcfv\iso C_5$, and so $A_4$ is empty.

Finally, suppose $v \in A_5$.  From symmetry, we may assume $N_S(v) =
\{b,c,d,e,f\}$.  Then $G|bcefv\iso\co{M_{2,1}}$.  Therefore $A_5$ is
empty and this proves (1).
\betweenspace
For $u,v \in S$, let $A_{uv}\subseteq A_2$ be those vertices that are
adjacent to $u$ and $v$.

\step{(2) $A_{ab} = A_{bc} = A_{cd} = A_{de} = A_{ef} = A_{fa} =
  \emptyset$.}

Suppose $v \in A_{ab}$. Then $G|abcfv\iso C_5$.  Therefore $A_{ab}$ is
empty and similarly, so are $A_{bc}$, $A_{cd}$, $A_{de}$, $A_{ef}$ ,and
$A_{fa}$.

\step{(3) For every $x,y \in S$, $A_{xy}$ is a stable set.}

Suppose $u,v \in A_{ac}$ are adjacent. Then $G|acdfuv\iso\co{\watch}$.
Therefore $A_{ac}$ is a stable set and similarly so are $A_{ae}$,
$A_{ce}$, $A_{bd}$, $A_{bf}$, and $A_{df}$.  Suppose $u,v \in A_{ad}$
are adjacent. Then $G|acdfuv\iso \co{\watch}$.  Therefore $A_{ad}$ is a
stable set and similarly so are $A_{be}$ and $A_{cf}$. This proves
(3).

\step{(4) If $A_{ac}\neq \emptyset$, then $A_{ae} = A_{ce} = A_{bd} =
  A_{bf} = \emptyset$.}

Suppose $u \in A_{ac}$, and $v \in A_{ae}$.  If $u$ and $v$ are
adjacent, then $G|aceuv\iso \co{M_{2,1}}$, and otherwise $G|bcduv\iso
M_{2,1}$.  Therefore if $A_{ac}$ is not empty, then $A_{ae} = \emptyset$
and similarly, $A_{ce} = \emptyset$.

Now suppose $v \in A_{bd}$.  If $u$ and $v$ are adjacent, then
$G|cdfuv\iso C_5$, and otherwise $G|bcefuv\iso\watch$.  Therefore
if $A_{ac}$ is not empty, then $A_{bd} = \emptyset$ and similarly,
$A_{bf} = \emptyset$.  This proves (4).

\step{(5) If $A_{ad}\neq \emptyset$, then $A_{ce} = A_{bf} = \emptyset$.}

Suppose $u \in A_{ad}$ and $v \in A_{ce}$.  If $u$ and $v$ are adjacent,
then $G|cdfuv\iso C_5$, and otherwise $G|cdeuv\iso\co{K_{2,3}}$.
Therefore if $A_{ad}$ is not empty, then $A_{ce}$ is empty and
similarly, $A_{bf}$ is empty as well.  This proves (5).

\step{(6) If $u \in A_{ad}$, then $N_G(u) \setminus S \subseteq
  A_{be} \cup A_{cf}$.}

Suppose $u \in A_{ad}$. Then from (5), $A_{ce} = A_{bf} = \emptyset$,
and from (3), $u$ has no neighbors in $A_{ad}$.  Now suppose $v \in
A_{ac}$ is adjacent to $u$.  Then $G|cdfuv\iso C_5$.  Therefore $u$ is
anticomplete to $A_{ac}$, and similarly, $u$ is anticomplete to
$A_{ae}$, $A_{bd}$, and $A_{df}$ as well.  Therefore $N_G(u) \setminus S
\subseteq A_{be} \cup A_{cf}$, and this proves (6).

\step{(7) If there are adjacent vertices $u \in A_{ad}$ and $v \in
  A_{be} \cup A_{cf}$, then $V(G) = S \cup \{u,v\}$ and $G$ is doubled.}

From symmetry, we may assume $v \in A_{cf}$ is adjacent to $u \in
A_{ad}$.  We know that $A_{ae} \cup A_{ce} \cup A_{bd} \cup A_{bf} =
\emptyset$ by (5).  Suppose $w \in A_{ac} \cup A_{df}$.  Then from (6),
$\{u,v\}$ is anticomplete to $w$ and so $G|beuvw\iso M_{2,1}$.
Therefore $A_{ac} \cup A_{ae} \cup A_{ce} \cup A_{bd} \cup A_{bf} \cup
A_{df} = \emptyset$.

Next, suppose $w (\neq v) \in A_{cf}$.  From (3), $w$ is not adjacent to
$v$. If $w$ is adjacent to $u$, then $G|abfuvw\iso\watch$, and otherwise
$G|beuvw\iso M_{2,1}$.  Therefore $A_{cf} = \{v\}$ and similarly,
$A_{ad} = \{u\}$.

Now suppose $w \in A_{be}$.  If $w$ is anticomplete to $\{u,v\}$, then
$G|beuvw\iso \co{K_{2,3}}$.  Therefore $w$ is adjacent to at least one
of $\{u,v\}$ and by the same logic as above, $A_{be} = \{w\}$.  If $w$
is adjacent to exactly one of $u$ and $v$ (say $u$), then $G|abcuvw\iso
\flag$.  So we may assume that $w$ is adjacent to both $u$ and $v$; but
then $G|(S \cup \{u,v,w\})\iso F_{23}$.  Therefore $A_{be}=\emptyset$.
But then $V(G) = S \cup \{u,v\}$.  Since $G|uvbe$ is matched, $G|abcf$
is antimatched, and the two subgraphs are aligned, it follows that $G$
is doubled.  This proves (7).

\step{(8) If $v \in A_{ac}$, then $N_G(v) \setminus S \subseteq
  A_{df}$.}

Suppose $v \in A_{ac}$. From (3), $v$ has no neighbors in $A_{ac}$.
From (4), $A_{ae} = A_{ce} = A_{bd} = A_{bf} = \emptyset$ and from (5),
$A_{be} = \emptyset$.  Finally, from (6), $v$ is anticomplete to $A_{ad}
\cup A_{cf}$.  Therefore $N_G(v) \setminus S \subseteq A_{df}$, and this
proves (8).

\step{(9) If there are adjacent vertices $u \in A_{ac}$ and $v \in
  A_{df}$, then $V(G) = S \cup \{u,v\}$ and $G$ is doubled.}

From (4) and (5), $A_{ae} = A_{ce} = A_{bd} = A_{bf} = A_{be} =
\emptyset$.  If $w \in A_{ad} \cup A_{cf}$, then from (8), $w$ is
anticomplete to $\{u,v\}$ and so $G|beuvw\iso M_{2,1}$.
Therefore $A_{ad} \cup A_{cf}$ is empty and $V(G) \setminus S = A_{ac}
\cup A_{df}$.

Now suppose $w (\neq v) \in A_{df}$.  From (3), $w$ is not adjacent to
$v$.  If $w$ is adjacent to $u$, then $G|cdeuvw\iso \flag$, and
otherwise $G|beuvw\iso M_{2,1}$.  Therefore $A_{df} = \{v\}$ and
similarly, $A_{ac} = \{u\}$.  Hence, $V(G) = S \cup \{u,v\}$.  Since
$G|uvbe$ is matched, $G|abcf$ is antimatched, and the two subgraphs are
aligned, it follows that $G$ is doubled.  This proves (9).

\step{(10) If $G| (V(G) \setminus S)$ is a stable set, then $G$ is
  doubled.}

Suppose $G| (V(G) \setminus S)$ is a stable set.  First, suppose $A_{ac}
\cup A_{ce} \cup A_{ae} \cup A_{bd} \cup A_{bf} \cup A_{df} \neq
\emptyset$.  From symmetry, we may assume $A_{ac} \neq \emptyset$.  Then
from (4), $A_{ae} = A_{ce} = A_{bd} = A_{bf} = \emptyset$ and from
(5), $A_{be} = \emptyset$.  Therefore every vertex in $V(G) \setminus S$
has exactly one neighbor in $\{a,f\}$ and exactly one neighbor in
$\{c,d\}$. Now it is easy to see that $G$ is doubled with
$G|acdf$ as the antimatched part.

Therefore we may assume $A_{ac} \cup A_{ce} \cup A_{ae} \cup A_{bd} \cup
A_{bf} \cup A_{df} = \emptyset$.  Suppose all three of the sets $A_{ad},
A_{be}$, and $A_{cf}$ are not empty.  Then for $u \in A_{ad}, v \in
A_{be}$, and $w \in A_{cf}$, $G|afuvw\iso M_{2,1}$.
Therefore from symmetry, we may assume $A_{be}$ is empty.  Now again,
every vertex in $V(G) \setminus S$ has exactly one neighbor in $\{a,f\}$ and
exactly one neighbor in $\{c,d\}$, so $G$ is doubled with
$G|acdf$ as the antimatched part.  This proves (10).
\betweenspace
By (10), we may assume that $G|V(G) \setminus S$ contains an edge $uv$.
From symmetry, we may assume $u \in A_{ad}$ or $u \in A_{ac}$.  If $u
\in A_{ad}$, then by (6) and (7), $v \in A_{be} \cup A_{cf}$ and $G$ is
doubled.  So we may assume that $u \in A_{ac}$; but then by (8) and (9),
$v \in A_{df}$ and $G$ is doubled. This proves $\ref{C_6}$.
\end{proof}

\begin{theorem}\label{Domino}
A graph containing $\co{\domino}$ but no graphs in $\mathcal{F}$ is
doubled.
\end{theorem}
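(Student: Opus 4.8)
The plan is to mimic exactly the structure of the three preceding theorems. Fix an induced copy $G|abcdef \iso \co{\domino}$; since $\domino$ is self-complementary under the relabeling and we have already handled $M_{2,1}$ via \ref{M_{2,1}}, I may assume that neither $G$ nor $\co{G}$ contains $M_{2,1}$. I first need to record the edge structure of $\co{\domino}$ explicitly: the $\domino$ is two squares sharing an edge, so its complement has a specific degree sequence, and I would pick the labeling that makes the intended final partition (four vertices forming an antimatched side, two vertices plus new vertices forming a matched side, or vice versa) most transparent. For $0 \le i \le 6$ I partition $V(G)\setminus S$ into sets $A_i$ of vertices with exactly $i$ neighbors in a chosen four-vertex subset of $S$, just as in \ref{C_6}.

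The core of the argument is a sequence of claims of the form ``$A_i = \emptyset$ for the bad values of $i$,'' each proved by the same mechanism: for a vertex $v$ with a forbidden neighborhood pattern in $S$, I exhibit an explicit five- or six-vertex induced subgraph of $G|(S\cup\{v\})$ (or a slightly larger set) isomorphic to one of the graphs in $\mathcal{F}$ or its complement, deriving a contradiction. I expect to eliminate all $A_i$ except one distinguished value, and to show that the surviving vertices attach to $S$ in exactly one of a small number of symmetric patterns. Then, as in steps (2)--(9) of \ref{C_6}, I would introduce subsets $A_{uv}$ indexed by which pairs in $S$ a vertex is adjacent to, show each such set is stable, show incompatible pairs of sets cannot both be nonempty, and show that any edge inside $V(G)\setminus S$ forces $V(G)$ to be small and directly verifiable as doubled. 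Finally, in the case where $G|(V(G)\setminus S)$ is itself stable, I would check that every external vertex has exactly one neighbor on each of the relevant pairs, exhibiting the matched/antimatched/aligned partition outright.

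The main obstacle will be bookkeeping rather than any single hard idea: $\co{\domino}$ has less symmetry than $\co{C_6}$ (whose automorphism group is large and which the previous proof exploited heavily with ``by symmetry''), so I expect fewer symmetry reductions and correspondingly more cases to check by hand when classifying the neighborhood patterns $N_S(v)$ and when ruling out pairs of nonempty $A_{uv}$. The key risk is choosing the wrong four-vertex subset of $S$ to count neighbors against, which would make the $A_i$ analysis asymmetric and unwieldy; I would therefore first identify the automorphisms of $\co{\domino}$ and pick the counting set to be an orbit or a union of orbits, so that the bulk of the casework collapses under ``from symmetry we may assume.'' Once the right coordinates are chosen, each claim should reduce to spotting a forbidden subgraph from Figure~\ref{fig:double-split}, exactly as in the $M_{2,1}$, $P_5$, and $\co{C_6}$ proofs, and the theorem will follow by assembling the partition and verifying the three double-split conditions.
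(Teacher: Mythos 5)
Your proposal is a plan, not a proof, and for this theorem the proof \emph{is} precisely the casework that the plan defers. You never fix an edge list for $\co{\domino}$, never commit to a counting set, never state the target partition, and never exhibit a single forbidden subgraph certifying a single excluded attachment pattern. (For the record, the natural choice is to count neighbors in the four degree-three vertices, labeled so that the edges of $\co{\domino}$ are $ab, bc, ca, bd, ce, de, df, ef$ and $\{b,c,d,e\}$ induces $C_4$; the conclusion turns out to be that $A_0=A_1=A_3=A_4=\emptyset$ and that $A_2\cup\{a,f\}$ is a \emph{stable set}, so the $\co{C_6}$-style endgame you anticipate --- an edge outside $S$ forcing $V(G)$ to be small --- never occurs here, and there is no analogue of steps (5)--(9) of \ref{C_6}.) Each exclusion of an attachment pattern is an independent claim that could fail; asserting that ``each claim should reduce to spotting a forbidden subgraph'' establishes none of them.

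Moreover, one of your framing claims is provably false as stated, because your reduction is too weak. You propose to assume only that $G$ and $\co{G}$ contain no $M_{2,1}$, and your justification --- that $\domino$ is self-complementary --- is wrong ($\domino$ has $7$ of the $15$ possible edges); the legitimate reason is that $\mathcal{F}$ and the class of doubled graphs are closed under complementation, so \ref{M_{2,1}} applies to $\co{G}$ as well. But the paper's proof additionally assumes, via \ref{P_5} and \ref{C_6}, that $G$ contains none of $P_5$, $C_6$, or their complements, and this is genuinely needed: with the labeling above, consider a vertex $v$ with $N_S(v)=\{b,c,f\}$. The graph $G|(S\cup\{v\})$ is itself doubled --- take $\{b,c,d,e\}$ as the antimatched side and $\{a,f,v\}$, whose only edge is $fv$, as the semi-matched side; alignment is checked directly --- so it contains no member of $\mathcal{F}$, and one can also verify it contains no induced $M_{2,1}$ or $\co{M_{2,1}}\iso W_4$. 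Hence this pattern cannot be killed by any local forbidden-subgraph certificate of the kind your plan relies on; the only way to dispatch it is to observe that $G|bcdefv\iso\co{C_6}$ and invoke \ref{C_6}. So before the casework can even begin you must strengthen the reduction to exclude $M_{2,1}$, $P_5$, $C_6$ and their complements --- and then the casework itself, the entire substance of the theorem, still remains to be done.
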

\begin{proof}
Let $G$ be a graph containing $\co{\domino}$ but no graphs in
$\mathcal{F}$.  By \ref{M_{2,1}}, \ref{P_5}, and \ref{C_6}, we may
assume that $G$ does not contain $M_{2,1}$, $P_5$, $C_6$, or their
complements as induced subgraphs.  Let $G|abcdef\iso \co{\domino}$,
where $ab$, $bc$, $ca$, $bd$, $ce$, $de$, $df$, and $ef$ are the edges;
let $S = \{a,b,c,d,e,f\}$.  For $0\leq i \leq 4$, let $A_i \subseteq
V(G) \setminus S$ denote the set of vertices that have $i$ neighbors in
$\{b,c,d,e\}$. Our goal is to show the following:
\begin{itemize}
\item $A_0 = A_1 = A_3 = A_4 = \emptyset$, and
\item $G|(A_2 \cup \{a,f\})$ is a stable set, and
\item $G|bcde$ is antimatched, and
\item $A_2 \cup \{a,f\}$ and $\{b,c,d,e\}$ are aligned.
\end{itemize}
Together, these statements imply that $G$ is doubled.

\step{(1) $A_0 = A_1 = A_3 = A_4 = \emptyset$.}

Suppose $v \in A_0$.  If $v$ is complete to $\{a,f\}$, then $G|abdfv\iso
C_5$, and if $v$ is anticomplete to $\{a,f\}$, then $G|acdfv\iso
M_{2,1}$.  So we may assume that $v$ is adjacent to exactly one of $a$
and $f$, say $a$; but then $G|adefv\iso \co{K_{2,3}}$.  Therefore $A_0 =
\emptyset$.

Next, suppose $v \in A_1$.  From symmetry, we may assume
$N_{\{b,c,d,e\}}(v) = \{b\}$.  If $v$ is complete to $\{a,f\}$, then
$G|acefv\iso C_5$, and if $v$ is anticomplete to $\{a,f\}$, then
$G|acdfv\iso M_{2,1}$.  Furthermore, if $v$ is adjacent to $a$ but not
to $f$, then $G|abefv\iso \co{K_{2,3}}$.  So we may assume that $v$ is
adjacent to $f$ but not to $a$; but then $G|bcefv\iso C_5$.  Therefore
$A_1 = \emptyset$.

Next, suppose $v \in A_3$.  From symmetry, we may assume $N_{\{b,c,d,e\}}(v) =
\{b,c,e\}$.  If $v$ is not adjacent to $f$, then
$G|bcdefv\iso\co{\flag}$, and if $v$ is complete to $\{a,f\}$, then
$G|abcdfv\iso \co{\watch}$.  So we may assume that $v$ is adjacent to $f$ but
not to $a$; but then $G|abdefv\iso \co{\fish}$.  Therefore $A_3 = \emptyset.$

Finally, suppose $v \in A_4$. Then $G|bcdev\iso \co{M_{2,1}}$.
Therefore $A_4 = \emptyset.$ This proves (1).
\betweenspace
For $u,v \in \{b,c,d,e\}$, let $A_{uv}\subseteq A_2$ be those vertices
that are adjacent to $u$ and $v$.

\step{(2) $A_{be} = A_{cd} = \emptyset$. Moreover, $A_2 \cup \{a,f\}$ is a
  stable set.}

Suppose $v \in A_{be} \cup A_{cd}$; then $G|bcdev\iso K_{2,3}$.
Therefore $A_{be} = A_{cd} = \emptyset$.  Next, suppose $v \in A_{bc}$. If $v$
is adjacent to $a$, then $G|abcdev\iso \co{\watch}$ and if $v$ is adjacent
to $f$, then $G|bcdefv\iso \co{C_6}$.  Therefore $A_{bc}$ is
anticomplete to $\{a,f\}$, and from symmetry, so is $A_{de}$.

Now suppose $v \in A_{bd}$. If $v$ is adjacent to $a$, then $G|acdev\iso
C_5$ and if $v$ is adjacent to $f$, then $G|cdefv\iso C_5$.  Therefore
$A_{bd}$ is anticomplete to $\{a,f\}$, and from symmetry, so is
$A_{ce}$.  It follows that $A_2$ is anticomplete to $\{a,f\}$.  Note
that for $v \in A_{bc} \cup A_{de}$, either $G|abcdev\iso \domino$ or
$G|bcdefv\iso \domino$, and so by an argument analogous to the one
above, we conclude that $A_{bc} \cup A_{de}$ is anticomplete to $A_{bd}
\cup A_{ce}$ and that $A_{bc} \cup A_{de}$ is a stable set; hence
$A_{bc} \cup A_{de}\cup \{a,f\}$ is a stable set.

It remains to show that $A_{bd} \cup
A_{ce}$ is a stable set.  For suppose $u,v \in A_{bd}$ are adjacent; then
$G|bcdeuv\iso \co{\watch}$.  Therefore $A_{bd}$ is a stable set and from
symmetry, so is $A_{ce}$.  Next, suppose $u \in A_{bd}$ and $v \in A_{ce}$ are
adjacent;  then $G|bcdeuv\iso\co{C_6}$.  Therefore $A_2 \cup \{a,f\}$ is
a stable set and this proves (2).
\betweenspace
Now $\{b,c,d,e\}$ is anti-matched by definition and $A_2 \cup \{a,f\}$
is a stable set by (2).  It remains to show that $A_2 \cup \{a,f\}$ and
$\{b,c,d,e\}$ are aligned.  Since $A_2 \cup \{a,f\}$ is a stable set, it
suffices to show that for all $v \in A_2 \cup \{a,f\}$, $v$ is adjacent
to exactly one of $b,e$ and exactly one of $c,d$.  For $v \in \{a,f\}$
this is true by definition, and for $v \in A_2$ this follows from (2).
Therefore $G$ is doubled and this proves \ref{Domino}.
\end{proof}

\begin{theorem}\label{Tent1}
A graph containing $\tent_1$ but no graphs in $\mathcal{F}$ is doubled.
\end{theorem}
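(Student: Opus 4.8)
The plan is to mirror the structure of the proofs of \ref{M_{2,1}}--\ref{Domino}. Write $G|abcdef \iso \tent_1$ with apex $a$ adjacent to $b,c,d,f$ and base path $b$-$c$-$d$-$e$-$f$, so that the edges are $ab,ac,ad,af,bc,cd,de,ef$ and $a$ is the unique non-neighbour of $e$; put $S=\{a,b,c,d,e,f\}$. By \ref{M_{2,1}}, \ref{P_5}, \ref{C_6}, and \ref{Domino} we may assume $G$ contains none of $M_{2,1}$, $P_5$, $C_6$, $\domino$, or any of their complements. The target partition is the one exhibiting $\tent_1$ itself as doubled: the matched side is $\{b,d,e\}$ (the edge $de$ together with the isolated vertex $b$) and the antimatched side is $\{a,c,f\}$ (the universal vertex $a$ together with the nonadjacent pair $c,f$). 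Accordingly, for $0\le i\le 4$ I classify each $v\in V(G)\setminus S$ by the number $i$ of its neighbours in the ``core'' $\{c,d,e,f\}$, which is exactly the matched pair $\{d,e\}$ together with the antimatched pair $\{c,f\}$. The goal is to show that $A_0=A_2=A_4=\emptyset$, that every vertex of $A_1$ has its unique core-neighbour in $\{c,f\}$, that every vertex of $A_3$ is adjacent to $c,f$ and to exactly one of $d,e$, and finally that $G|(A_1\cup\{b,d,e\})$ is semi-matched, $G|(A_3\cup\{a,c,f\})$ is semi-antimatched, and these two sides are aligned; together these give that $G$ is doubled.

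The observation driving the emptiness claims is that $\{b,c,e,f\}$ induces a $2K_2$ (the edges $bc$ and $ef$), so any external vertex nonadjacent to all of $b,c,e,f$ completes an induced $M_{2,1}$, and dually any vertex adjacent to all four completes $\co{M_{2,1}}=W_4$. First I would dispose of $A_0$: if $v$ has no neighbour in $\{c,d,e,f\}$ then either $v\not\sim b$, giving $M_{2,1}$ on $\{b,c,e,f,v\}$, or $v\sim b$, giving $P_5$ on the path $v$-$b$-$c$-$d$-$e$-$f$; the class $A_4$ falls by the complementary argument, via $W_4$ or $\co{P_5}$. The refinement of $A_1$, $A_3$ and the emptiness of $A_2$ are handled by the same method, subdividing each time on whether $v$ is adjacent to the apex $a$ and to $b$ and then locating a small forbidden induced subgraph. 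For example, a vertex whose only core-neighbour is $d$ but which is also adjacent to $b$ yields $F_5=\flag$ on $\{v,b,c,d,e,f\}$ (a $C_4$ with a length-two tail), while a vertex whose only neighbour in $S$ is $e$ yields $F_3=\watch$ on $\{a,b,d,e,f,v\}$ (a $C_4$ with two opposite pendants); the remaining core-patterns produce $C_5$, $K_{2,3}$, $\fish$, their complements, or one of the seven-vertex graphs $F_{13}$--$F_{20}$, and the symmetric $A_3$ cases produce the complements of these. Once the classes are pinned down, the three structural conclusions follow as before: the matched side is the edge $de$ together with the matching induced by $A_1\cup\{b\}$, the antimatched side is the complement of such a union, and alignment reduces to the facts just established (each antimatched vertex meets $\{d,e\}$ in exactly one vertex, each matched vertex meets $\{c,f\}$ in exactly one), the cross-conditions between $A_1$ and $A_3$ being forced by $M_{2,1}$ and $\co{M_{2,1}}$.

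The main obstacle is precisely this case analysis. Unlike the $P_5$ situation, where both extra vertices sit on the matched side, $\tent_1$ places its two extra vertices on opposite sides -- the apex $a$ on the antimatched side and $b$ on the matched side -- so an external vertex is not determined by its core-neighbours alone, and one must genuinely subdivide on its adjacencies to both $a$ and $b$ and then control the edges between and within $A_1$ and $A_3$. The bookkeeping is heavy because the pool of excluded graphs is large (all of $\mathcal{F}$ and its complements together with $M_{2,1},P_5,C_6,\domino$ and theirs), and in several branches the correct witness is one of the seven-vertex members $F_{13}$--$F_{20}$ rather than a five- or six-vertex one; pinning down the right witness in every branch is the crux, whereas assembling the surviving classes into the doubled partition above is routine.
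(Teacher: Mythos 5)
Your plan is, up to relabelling, the same as the paper's proof: the same reduction via the four preceding theorems, the same four-vertex core (the matched edge together with the antimatched non-edge), the same classification $A_0,\dots,A_4$ by neighbours in that core, the same target partition, and even the same witnesses in the two sample cases you work out ($\flag$ and $\watch$). However, the ``observation driving the emptiness claims'' is false in exactly the half you invoke for $A_4$. A vertex $v$ complete to the $2K_2$ induced by $\{b,c,e,f\}$ does \emph{not} yield $\co{M_{2,1}}=W_4$: the graph ``$2K_2$ plus a dominating vertex'' has six edges while $W_4$ has eight; it is the complement of $C_4\cup P_0$, and $C_4\cup P_0$ is doubled (put one edge of the $C_4$ together with the isolated vertex on the matched side and the opposite edge on the antimatched side), so no contradiction arises. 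The other branch of your casing on $b$ fails too: for $v\in A_4$ with $v\not\sim b$, the set $\{b,c,d,e,f,v\}$ is not $\co{P_5}$ (it has eight edges, not ten) and in fact induces a doubled graph, so no member of $\mathcal{F}$ appears there either. The correct dual object is the induced $C_4$ on $\{a,d,e,f\}$ (apex plus $d,e,f$ in your labels): one must split the $A_4$ case on adjacency to the apex $a$, getting $W_4$ on $\{a,d,e,f,v\}$ when $v\sim a$ and $\co{P_5}$ on $\{a,c,d,e,f,v\}$ when $v\not\sim a$. That is what the paper does, and your sketch as written cannot reach this conclusion.

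There is a second, smaller gap in the assembly. You only claim that each $A_1$ vertex has its core neighbour \emph{somewhere in} $\{c,f\}$, and you explicitly allow a ``matching induced by $A_1\cup\{b\}$'' on the matched side. But if $A_1\cup\{b\}$ contained an edge $uv$, alignment would additionally require every vertex of $A_3\cup\{a,c,f\}$ to be adjacent to exactly one of $u,v$, so your reduction of alignment to the two conditions ``meets $\{d,e\}$ in exactly one vertex, meets $\{c,f\}$ in exactly one vertex'' would not suffice. What makes the assembly legitimate in the paper are the stronger statements: a vertex attached to $f$ alone gives $P_5$ or $C_6$ on $\{b,c,d,e,f,v\}$ (both excluded), so every $A_1$ vertex attaches to $c$; $A_1\cup\{b\}$ is a stable set (an edge inside it gives $\co{K_{2,3}}$); and dually $A_3\cup\{a\}$ is a clique complete to $\{c,f\}$. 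Then the matched side has the single edge $de$, the antimatched side has the single non-edge $cf$, no ``cross-conditions'' between $A_1$ and $A_3$ are needed at all, and alignment is exactly your two conditions. These strengthenings are provable inside your framework, so the plan is repairable, but as stated both the $A_4$ step and the final assembly have genuine holes.
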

\begin{proof}
Let $G$ be a graph containing $\tent_1$ but no graphs in $\mathcal{F}$.
By \ref{M_{2,1}}, \ref{P_5}, \ref{C_6}, and \ref{Domino}, we may assume
that $G$ does not contain $M_{2,1}$, $P_5$, $C_6$, $\domino$ or their
complements as induced subgraphs.  Let $G|abcdef\iso\tent_1$, where
$ab$, $bc$, $cd$, $de$, $fa$, $fb$, $fc$, and $fe$ are the edges; let $S =
\{a,b,c,d,e,f\}$.  For $0\leq i \leq 4$, let $A_i \subseteq V(G)
\setminus S$ denote the set of vertices that have $i$ neighbors in
$\{b,c,d,e\}$. Our goal is to show the following:
\begin{itemize}
\item $A_0 = A_2 = A_4 = \emptyset$, and
\item $G|(A_1 \cup \{a,c,d\})$ is semi-matched, and
\item $G|(A_3 \cup \{b,e,f\})$ is semi-antimatched, and
\item $A_1 \cup \{a,c,d\}$ and $A_3 \cup \{b,e,f\}$ are aligned.
\end{itemize}
Together, these statements imply that $G$ is doubled.

\step{(1) $A_0 = A_2 = A_4 = \emptyset$.}

Suppose $v \in A_0$.  If $v$ is adjacent to $a$, then $G|abcdev\iso
P_5$, and if $v$ is not adjacent to $a$, then $G|abdev\iso M_{2,1}$.
Therefore $A_0 = \emptyset$.

Next, suppose $v \in A_4$.  If $v$ is adjacent to $f$, then
$G|cdefv\iso\co{M_{2,1}}$, and if $v$ is not adjacent to $f$, then
$G|bcdefv\iso \co{P_5}$.  Therefore $A_4 = \emptyset$.

Next, we show that $A_2=\emptyset$.  For $u,v \in \{b,c,d,e\}$, let
$A_{uv}\subseteq A_2$ be those vertices that are adjacent to $u$ and
$v$.  If $v \in A_{be}$, then $G|bcdev\iso C_5$, and so $A_{be} =
\emptyset$.  Now suppose $v \in A_{bc}$.  If $v$ is adjacent to $a$,
then $G|abdev\iso\co{K_{2,3}}$ and if $v$ is adjacent to $f$, then
$G|bcdefv\iso \co{\watch}$.  So we may assume that $v$ is not adjacent
to either $a$ or $f$; but then $G|S \cup \{v\}\iso F_{20}$.  Therefore
$A_{bc} = \emptyset$.

Next, suppose $v \in A_{bd}$.  If $v$ is not adjacent to $f$, then
$G|bdefv\iso C_5$, and if $v$ is not adjacent to $a$, then $G|abcdev\iso
\watch$.  Hence, we may assume that $v$ is adjacent to both $a$ and $f$;
but then $G|abdefv\iso \co{\watch}$.  Therefore $A_{bd} = \emptyset$.

Next, suppose $v \in A_{cd}$.  If $v$ is adjacent to both $a$ and $f$, then
$G|abcfv\iso \co{M_{2,1}}$.  Next, if $v$ is adjacent to $a$ but not to $f$,
then $G|abcdfv\iso \co{\flag}$, and if $v$ is adjacent to $f$ but not to
$a$, then $G|abdefv\iso \fish$.  So we may assume that $v$ is not adjacent to
$a$ or $f$; but then $G|abdfv\iso \co{K_{2,3}}$.  Therefore $A_{cd} =
\emptyset$.

Next, suppose $v \in A_{ce}$.  Then $G|abcdev\iso \domino$ or
$\flag$ depending on the adjacency between $v$ and $a$.  Therefore
$A_{ce} = \emptyset$.

So we may assume that $v \in A_{de}$.  If $v$ is adjacent to $a$, then
$G|abcdv\iso C_5$, and if $v$ is not adjacent to $a$, then $G|abdev\iso
\co{K_{2,3}}$.  Therefore $A_2 = \emptyset$ and this proves (1).

\step{(2) $A_1$ is complete to $b$.}

For $u \in \{b,c,d,e\}$, let $A_{u}\subseteq A_1$ be those vertices that
are adjacent to $u$.  We will show that $A_c=A_d=A_e=\emptyset$.

Suppose $v \in A_c$.  If $v$ is adjacent to $a$, then $G|abcdev\iso
\flag$, and if $v$ is not adjacent to $a$, then $G|abdev\iso M_{2,1}$.
Therefore $A_c = \emptyset$.

Next, suppose $v \in A_e$.  Then $G|abcdev\iso P_5$ or $C_6$
depending on the adjacency between $v$ and $a$.  Therefore $A_e =
\emptyset$.

Next, suppose $v \in A_d$.  If $v$ is adjacent to $a$, then $G|abcdv\iso
C_5$, and if $v$ is adjacent to $f$ but not to $a$, then $G|abdefv\iso
\fish$.  So we may assume that $v$ is not adjacent to either $a$ or $f$;
but then $G|acdefv\iso \watch$.  Therefore $A_d = \emptyset$.  This
completes that proof of (2).

\step{(3) $A_1 \cup \{a\}$ is a stable set.}

Suppose $v \in A_1$ and $a$ are adjacent; then $G|abdev\iso
\co{K_{2,3}}$.  Therefore $A_1$ is anticomplete to $a$.  Next, suppose $u,v
\in A_1$ are adjacent.  Then $G|bdeuv\iso\co{K_{2,3}}$.
Therefore $A_1 \cup \{a\}$ is a stable set and this proves (3).

\step{(4) If $v \in A_3$, then $v \in A_{bce} \cup A_{bde}$.}

For $u,v,w \in \{b,c,d,e\}$, let $A_{uvw}\subseteq A_3$ be those
vertices that are adjacent to $u,v$ and $w$.

Suppose $v \in A_{bcd}$.  If $v$ is not adjacent to $f$, then
$G|bdefv\iso C_5$, and if $v$ is adjacent to $f$ but not to $a$, then
$G|S \cup \{v\}\iso \co{F_{21}}$.  So we may assume that $v$ is adjacent
to both $a$ and $f$; but then $G|abdefv\iso \co{\watch}$.  Therefore
$A_{bcd} = \emptyset$.

Next, suppose $v \in A_{cde}$.  If $v$ is adjacent to $f$, then
$G|cdefv\iso\co{M_{2,1}}$, and if $v$ is not adjacent to $f$, then
$G|bcdefv\iso \co{\flag}$.  Therefore $A_{cde} = \emptyset$.  This
proves (4).

\step{(5) $A_3 \cup \{f\}$ is a clique.}

Suppose $v \in A_{bde}$ and $v$ is not adjacent to $f$.  Then
$G|bcdefv\iso\co{C_6}$.  Next, suppose $v \in A_{bce}$ not adjacent to $f$.
Then $G|cdefv\iso K_{2,3}$.  Therefore $A_3$ is complete to $f$.

Next, suppose $u,v \in A_{bde} \cup A_{bce}$ are not adjacent.  If $u,v
\in A_{bde}$, then $G|bcduv\iso K_{2,3}$ and if $u,v \in A_{bce}$ then
$G|cdeuv\iso K_{2,3}$.  So we may assume that $u \in A_{bde}$ and $v \in
A_{bce}$; but then $G|bcdeuv\iso\co{C_6}$.  Therefore $A_3 \cup \{f\}$
is a clique and this proves (5).
\betweenspace
From (2) and (3), it follows that $A_1 \cup \{a,c,d\}$ is semi-matched
with one edge (namely, $cd$).  From (4) and (5), $A_3 \cup \{b,e,f\}$ is
semi-antimatched with one nonedge (namely, $be$).  Furthermore, it
follows by definition and from (2) that for all $u \in A_1 \cup
\{a,c,d\}$, $u$ is adjacent to exactly one of $b$ and $e$.  It also
follows by definition and from (4) that for all $v \in A_3 \cup
\{b,e,f\}$, $v$ is adjacent to exactly one of $c$ and $d$.  Therefore
$A_1 \cup \{a,c,d\}$ and $A_3 \cup \{b,e,f\}$ are aligned and this
proves \ref{Tent1}.
\end{proof}

\begin{theorem}\label{Tent2}
A graph containing $\tent_2$ but no graphs in $\mathcal{F}$ is doubled.
\end{theorem}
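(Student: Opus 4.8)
The plan is to follow the template of \ref{Tent1} almost verbatim, since $\tent_2$ is obtained from the same path $a$-$b$-$c$-$d$-$e$ by attaching an apex $f$; the only change is that in $\tent_2$ the apex misses the \emph{center} $c$ of the path rather than the off-center vertex $d$. First I would invoke \ref{M_{2,1}}, \ref{P_5}, \ref{C_6}, and \ref{Domino} together with \ref{Tent1} to assume that $G$ contains none of $M_{2,1}$, $P_5$, $\co{C_6}$, $\co{\domino}$, $\tent_1$, or their complements (if it did, one of those theorems would already give that $G$ is doubled). Fix $G|abcdef\iso\tent_2$ with path edges $ab,bc,cd,de$ and apex edges $fa,fb,fd,fe$, put $S=\{a,b,c,d,e,f\}$, and for $0\le i\le 4$ let $A_i$ be the vertices of $V(G)\setminus S$ with exactly $i$ neighbours in $\{b,c,d,e\}$. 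The partition I would aim for is the same one that exhibits $\tent_2$ itself as doubled: the matched side $A_1\cup\{a,c,d\}$ (whose only edge is $cd$) and the antimatched side $A_3\cup\{b,e,f\}$ (whose only non-edge is $be$).

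The reason this partition is convenient is that it collapses the alignment condition to a single pair on each side. Once I establish that $A_1=A_b$ (so that $A_1\cup\{a\}$ is an independent set anticomplete to $c,d$) and that $A_3\subseteq A_{bce}\cup A_{bde}$ with $A_3\cup\{f\}$ a clique, the matched side has $cd$ as its unique edge and the antimatched side has $be$ as its unique non-edge. Alignment then reduces to the two checks ``every vertex of the antimatched side is adjacent to exactly one of $c,d$'' and ``every vertex of the matched side is adjacent to exactly one of $b,e$,'' both of which follow from the definitions of $A_{bce},A_{bde},A_b$ together with the fact that $f$ is adjacent to $d$ but not $c$. So the whole argument reduces to the same five structural claims as in \ref{Tent1}: (1) $A_0=A_2=A_4=\emptyset$; (2) $A_c=A_d=A_e=\emptyset$; (3) $A_1\cup\{a\}$ is stable; (4) $A_{bcd}=A_{cde}=\emptyset$; and (5) $A_3\cup\{f\}$ is a clique. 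Each is proved by assuming a violating vertex or pair and exhibiting an induced member of $\mathcal F$ or one of the five previously excluded graphs: for instance $A_0=\emptyset$ comes from $P_5$ or $M_{2,1}$, $A_4=\emptyset$ from $\co{M_{2,1}}$ (that is, $W_4$) or $\co{P_5}$, and $A_c=A_e=\emptyset$ from $\flag$, $M_{2,1}$, $P_5$, or $C_6$, exactly paralleling the corresponding steps of \ref{Tent1}. Because the apex now avoids $c$ rather than $d$, the specific subgraph witnessing each case will often differ from the one used for $\tent_1$, so the bookkeeping has to be redone rather than copied.

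I expect the one genuinely hard part to be claim (1), and within it the proof that $A_2=\emptyset$. As in \ref{Tent1} I would split $A_2$ into the six classes $A_{bc},A_{bd},A_{be},A_{cd},A_{ce},A_{de}$ and, for each, case on the adjacency of the vertex to $a$ and to $f$; every resulting configuration should contain an induced $C_5$, $K_{2,3}$, $\watch$, $\flag$, $\fish$, $\domino$, $M_{2,1}$, $P_5$, one of their complements, or one of the larger graphs in $\mathcal F$ (such as $F_7$, $F_{17}$, $F_{18}$, or $F_{20}$). The new feature compared with \ref{Tent1} is that $\tent_2$ is vertex-symmetric: the reflection $\rho=(a\,e)(b\,d)$ fixing $c$ and $f$ is an automorphism (it is not one for $\tent_1$). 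This would normally halve the casework, but $\rho$ sends the counting set $\{b,c,d,e\}$ to $\{a,b,c,d\}$ and so does not preserve the sets $A_i$; concretely it matches $A_{bc}\leftrightarrow A_{cd}$ and fixes $A_{bd}$ only for vertices non-adjacent to $a$, while $A_{be},A_{ce},A_{de}$ and all the ``adjacent to $a$'' subcases fall outside its reach. Thus $\rho$ trims a few cases, but most of the $A_2$ analysis still has to be carried out by hand, and verifying that each of these many small configurations really does realize a forbidden graph is where the care is required.
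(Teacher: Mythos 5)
There is a genuine gap: your claim (2) (that $A_c=A_d=A_e=\emptyset$, so $A_1=A_b$) is false, and with it the single-partition plan collapses. Take $\tent_2$ together with one extra vertex $v$ whose neighbourhood in $S$ is exactly $\{d,f\}$. This graph is doubled --- put $\{v,a,e\}$ on the (stable) matched side and $\{b,c,d,f\}$ on the antimatched side, pairing $b$ with $d$ and $c$ with $f$ --- so it contains no member of $\mathcal{F}$, yet $v\in A_d$. Hence no forbidden-subgraph argument can eliminate $A_d$; this is precisely the symmetry $\rho=(a\,e)(b\,d)$ you noticed at work: it carries the attachment $\{b,f\}$ to $\{d,f\}$, so both ``wings'' are legitimate, unlike in $\tent_1$. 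Worse, when vertices of both types are present, your matched side $A_1\cup\{a,c,d\}$ contains the two edges $cd$ and $vd$ sharing the vertex $d$, so it is not semi-matched at all; the correct partition in that case is the one above, with $\{b,c,d,f\}$ antimatched and everything else stable --- a partition of a completely different shape from the $\tent_1$ one, which your outline never considers.

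The paper's proof is organized around exactly this phenomenon. It first shows that every vertex outside $S$ has attachment precisely $\{b,f\}$, $\{d,f\}$, or $\{a,b,d,e,f\}$ (so in particular $A_2\subseteq A_{bf}\cup A_{df}$ and $A_3=A_{bde}$, your $A_{bce}$ being empty as well); it then proves that at least one of the three classes $A_{bf}$, $A_{df}$, $A_{abdef}$ is empty --- this is where the nine-vertex graph $F_{22}$ enters, since one vertex from each class yields an induced $\co{F_{22}}$ --- and finally it builds one of three different aligned partitions according to which class is empty. Your outline uses only the $\tent_1$-style partition $A_1\cup\{a,c,d\}$ versus $A_3\cup\{b,e,f\}$ and never produces a nine-vertex configuration, so it cannot be completed: $F_{22}$ is a minimal non-doubled graph containing $\tent_2$ (this theorem is the only place in the paper where $F_{22}$ is used), so any correct proof of the statement must invoke it somewhere.
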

\begin{proof}
Let $G$ be a graph containing $\tent_2$ but no graphs in $\mathcal{F}$.
By \ref{M_{2,1}}, \ref{P_5}, \ref{C_6}, \ref{Domino}, and \ref{Tent1},
we may assume that $G$ does not contain $M_{2,1}$, $P_5$, $C_6$,
$\domino$, $\tent_1$ or their complements as induced subgraphs.  Let
$G|abcdef\iso\tent_2$, where $ab$, $bc$, $cd$, $de$, $fa$, $fb$, $fd$,
and $fe$ are the edges; let $S = \{a,b,c,d,e,f\}$.  First, we show that
if $v \in V(G) \setminus S$, then $N_S(v)$ is equal to $\{b,f\}$,
$\{d,f\}$, or $\{a,b,d,e,f\}$.

Let $A_{bf}$ be those vertices whose neighbor set in $S$ is $\{b,f\}$
and define $A_{df}$ and $A_{abdef}$ similarly.  We also prove that at
least one of $A_{bf}, A_{df}$ and $A_{abdef}$ is empty.  Then our goal
is to show the following:

If $A_{abdef} = \emptyset$, then
\begin{itemize}
\item $G|(A_2 \cup \{a,e\})$ is semi-matched, and
\item $G|\{b,c,d,f\}$ is antimatched, and
\item $A_2 \cup \{a,e\}$ and $\{b,c,d,f\}$ are aligned.
\end{itemize}

If $A_{df} = \emptyset$, then
\begin{itemize}
\item $G|(A_2 \cup \{a,c,d\})$ is semi-matched, and
\item $G|(A_5 \cup \{b,e,f\})$ is semi-antimatched, and
\item $A_2 \cup \{a,c,d\}$ and $A_5 \cup \{b,e,f\}$ are aligned.
\end{itemize}

If $A_{bf} = \emptyset$, then
\begin{itemize}
\item $G|(A_2 \cup \{b,c,e\})$ is semi-matched, and
\item $G|(A_5 \cup \{a,d,f\})$ is semi-antimatched, and
\item $A_2 \cup \{b,c,e\}$ and $A_5 \cup \{a,d,f\}$ are aligned.
\end{itemize}
Together, these statements imply that $G$ is doubled.

\step{(1) For $v \in V(G) \setminus S$, $N_S(v)$ is equal to $\{b,f\}$,
  $\{d,f\}$, or $\{a,b,d,e,f\}$.}

We show that $N_{\{b,c,d\}}(v)$ is equal to $\{b\}$, $\{d\}$, or
$\{b,d\}$ and for each case, $N_S(v)$ is equal to $\{b,f\}$, $\{d,f\}$,
or $\{a,b,d,e,f\}$, respectively.

First, suppose $N_{\{b,c,d\}}(v) = \emptyset$.  If $v$ is complete to
$\{a,e\}$, then $G|abcdev\iso C_6$, and if $v$ is adjacent to exactly one
of $a$ and $e$, then $G|abcdev\iso P_5$.  So we may assume that $v$ is
anticomplete to $\{a,e\}$; but then $G|abdev\iso M_{2,1}$.  Therefore
$N_{\{b,c,d\}}(v)$ cannot be empty.

Next, suppose $N_{\{b,c,d\}}(v) = \{b\}$.  If $v$ is adjacent to $e$, then
$G|bcdev\iso C_5$, and if $v$ is adjacent to $a$, then
$G|abdev\iso \co{K_{2,3}}$.  If $v$ is not adjacent to
$f$, then $G|bcdefv\iso \co{\tent_1}$.  Therefore $N_S(v)
= \{b,f\}$.  Similarly if $N_{\{b,c,d\}}(v) = \{d\}$, then $N_S(v) =
\{d,f\}$.

Next, suppose $N_{\{b,c,d\}}(v) = \{c\}$.  If $v$ is complete to
$\{a,e\}$, then $G|abcdev\iso \domino$, and if $v$ is adjacent to
exactly one of $a$ and $e$, then $G|abcdev\iso \flag$.  So we may assume
that $v$ is anticomplete to $\{a,e\}$; but then $G|abdev\iso M_{2,1}$.
Therefore $N_{\{b,c,d\}}(v)$ cannot be equal to $\{c\}$.

Next, suppose $N_{\{b,c,d\}}(v) = \{b,c\}$.  If $v$ is complete to
$\{e,f\}$, then $G|bcdefv\iso \co{P_5}$.  If $v$ is adjacent to $e$ but
not to $f$, then $G|bcdefv\iso \co{C_6}$, and if $v$ is adjacent to $f$
but not to $e$, then $G|bcdefv\iso\co{\flag}$.  So we may assume that
$v$ is anticomplete to $\{e,f\}$; but then $G|bcdefv\iso\co{\domino}$.
Therefore $N_{\{b,c,d\}}(v)$ cannot be $\{b,c\}$ and from symmetry,
$N_{\{b,c,d\}}(v)$ cannot be $\{c,d\}$.

Next, suppose $N_{\{b,c,d\}}(v) = \{b,d\}$.  If $v$ is not adjacent to
$f$, then $G|bcdfv\iso K_{2,3}$.  If $v$ is anticomplete to $\{a,e\}$,
then $G|abcdev\iso \watch$.  If $v$ is adjacent to one of $a$ and $e$,
then $G|abcdev\iso\co{\tent_1}$.  Therefore $N_S(v) = \{a,b,d,e,f\}$.

Finally, suppose $N_{\{b,c,d\}}(v) = \{b,c,d\}$.  If $v$ is adjacent to
$f$, then $G|bcdfv\iso \co{M_{2,1}}$.  If $v$ is not adjacent to $a$,
then $G|abcdfv\iso \co{\flag}$, while if $v$ is adjacent to $a$, then
$G|abcdfv\iso \co{P_5}$.  Therefore $v$ cannot be complete to
$\{b,c,d\}$.

Together, these statements prove (1).
\betweenspace
\step{(2) $A_{bf} \cup A_{df}$ is a stable set, and $A_{abdef}$ is a clique
complete to $A_{bf} \cup A_{df}$.}

Suppose $u,v \in A_{bf}$ are adjacent; then $G|bcdfuv\iso\co{\watch}$.
Therefore $A_{bf}$ is a stable set and similarly, so is $A_{df}$.
Now suppose $u \in A_{bf}$ and $v \in A_{df}$ are adjacent.  Then
$G|bcduv\iso C_5$.  Therefore $A_{bf} \cup A_{df}$ is a stable set.

Next, suppose $u,v \in A_{abdef}$ are not adjacent; then $G|bcduv\iso
K_{2,3}$.  Therefore $A_{abdef}$ is a clique.

Finally, suppose $u \in A_{bf}$ and $v \in A_{abdef}$ are not adjacent.
Then $G|abcdev \iso\tent_2$ and $u$ has only one neighbor in
$\{a,b,c,d,e,v\}$, which is impossible by (1).  Therefore $A_{abdef}$ is
complete to $A_{bf}$ and similarly to $A_{df}$, and this proves (2).

\step{(3) At least one of $A_{bf}, A_{df}$, and $A_{abdef}$ is empty.}

Suppose $u \in A_{bf}$, $v \in A_{df}$, and $w \in A_{abdef}$.  From
(2), $w$ is complete to $\{u,v\}$ and $u$ is not adjacent to $v$.  It follows
that $G|abcdefuvw\iso \co{F_{22}}$.  Therefore at least one of
$A_{bf}, A_{df}$, and $A_{abdef}$ is empty and this proves (3).
\betweenspace
If $A_{abdef}=\emptyset$, then it follows from (2) that $G|(A_{bf} \cup
A_{df} \cup \{a,e\})$ is a stable set.  Also, $G|bcdf$ is antimatched by
assumption and $A_{bf} \cup A_{df} \cup \{a,e\}$ and $\{b,c,d,f\}$ are
aligned by assumption and definition.  Hence, $G$ is doubled.

So we may assume that $A_{abdef} \neq \emptyset$.  Then by (3), one of
$A_{bf}$ and $A_{df}$ is empty and from symmetry, we may assume $A_{df}$
is empty.  Then $G|(A_{bf} \cup \{a,c,d\})$ is semi-matched with an edge
$cd$, and $G|(A_{abdef} \cup \{b,e,f\})$ is semi-antimatched with a
non-edge $be$.  It also follows from assumption and definition that for
all $u \in A_{bf} \cup \{a,c,d\}$, $u$ is adjacent to exactly one of $b$
and $e$ and for all $v \in A_{abdef} \cup \{b,e,f\}$, $v$ is adjacent to
exactly one of $c$ and $d$.  Hence, $G$ is doubled.  This proves
\ref{Tent2}.
\end{proof}

We are now ready to prove the main result.

\begin{proof}[Proof of \ref{dsplit}.]
  The ``only if'' part is obvious since none of the graphs in
  $\mathcal{F}$ are doubled. For the ``if'' part, we may assume $G$ is
  not almost-split and hence $G$ or $\co{G}$ contains one of $M_{2,1}$,
  $P_5$, $C_6$, $\domino$, $\tent_1$, and $\tent_2$ as an induced
  subgraph.  But then we are done by \ref{M_{2,1}}, \ref{P_5},
  \ref{C_6}, \ref{Domino}, \ref{Tent1}, or \ref{Tent2} applied to $G$ or
  $\co{G}$, keeping in mind that the complement of a doubled graph is
  doubled.
\end{proof}

\section*{Acknowledgments}

The first author acknowledges the support of the NSF Graduate Research
Fellowship.  All of the authors would like to thank BIRS for organizing
the workshop ``New trends on structural graph theory'', where this
research was initiated.

\end{document}